\def\latex/{{\protect\LaTeX}}
\def\latexe/{{\protect\LaTeXe}}
\def\amslatex/{{\protect\AmS-\protect\LaTeX}}
\def\tex/{{\protect\TeX}}
\def\amstex/{{\protect\AmS-\protect\TeX}}
\def\bibtex/{{Bib\protect\TeX}}
\def\makeindx/{\textit{MakeIndex}}
\theoremstyle{plain}
\newtheorem{thm}{Theorem}[section]
\newtheorem{prop}[thm]{Proposition}
\newtheorem{cor}[thm]{Corollary}
\theoremstyle{definition}
\newtheorem{lem}[thm]{Lemma}
\newtheorem{eg}[thm]{Example}
\newtheorem{ques}[thm]{Question}
\newtheorem{rmk}[thm]{Remark}
\numberwithin{equation}{section}
\newcommand{\fm}{\mathfrak{m}}
\newcommand{\fp}{\mathfrak{p}}
\newcommand{\CC}{\mathbb{C}}
\newtheorem{chunk}[thm]{\hspace*{-1.065ex}\bf}
\DeclareMathOperator{\Max}{Max}
\DeclareMathOperator{\e}{e}
\def\GC-dim{\operatorname{\mathsf{G_C-dim}}}
\DeclareMathOperator{\pd}{pd}
\DeclareMathOperator{\id}{id}
\DeclareMathOperator{\Ann}{Ann}
\DeclareMathOperator{\G-dim}{G-dim}
\DeclareMathOperator{\depth}{depth}
\DeclareMathOperator{\Ext}{Ext}
\DeclareMathOperator{\Tor}{Tor}
\def\Tr{\mathsf{Tr}\hspace{0.01in}}
\DeclareMathOperator{\coker}{coker}
\DeclareMathOperator{\hh}{H}
\def\Hom{\operatorname{Hom}}
\DeclareMathOperator{\im}{im}
\DeclareMathOperator{\Soc}{Soc}
\DeclareMathOperator{\Supp}{Supp}
\DeclareMathOperator{\Spec}{Spec}
\def \syz {\Omega} 
\def \p {\mathfrak p}
\newcommand{\tensor}{\otimes^{\bf L}}
\def\Gc-dim{\operatorname{\mathsf{G_C-dim}}}
\def \gdim{\operatorname{G-dim}} 
\begin{document}
\baselineskip=15pt
	
\title[On the projective dimension of tensor products of modules
]{On the projective dimension of tensor products of modules}
	
\author{Olgur Celikbas}
\address{Olgur Celikbas\\ School of Mathematical and Data Sciences, West Virginia University, 
Morgantown, WV 26506 U.S.A}
\email{olgur.celikbas@math.wvu.edu}

\author{Souvik Dey}
\address{Souvik Dey\\
Faculty of Mathematics and Physics,
Department of Algebra,
Charles University, 
Sokolovsk\'{a} 83, 186 75 Praha, 
Czech Republic}
\email{souvik.dey@matfyz.cuni.cz}

\author{Toshinori Kobayashi}
\address{School of Science and Technology, Meiji University, 1-1-1 Higashi-Mita, Tama-ku, Kawasaki-shi, Kanagawa 214-8571, Japan}
\email{tkobayashi@meiji.ac.jp}

\subjclass[2010]{Primary 13D07; Secondary 13H10, 13D05, 13C12}
\keywords{Projective and injective dimensions of tensor products of modules, vanishing of Ext and Tor} 
\thanks{Souvik Dey was partly supported by the Charles University Research Center program No.
UNCE/24/SCI/022 and a grant GACR 23-05148S from the Czech Science Foundation. Toshinori Kobayashi was partly supported by JSPS Grant-in-Aid for JSPS Fellows 21J00567}	
\maketitle{}

\vspace{-0.2in}
\begin{center}
\small{\emph{In memory of Nicholas Ryan Baeth}}
\end{center}

\begin{abstract} In this paper, we consider finitely generated modules over commutative Noetherian rings whose tensor products have finite projective dimension. We construct examples of modules of infinite projective dimension (and also of infinite Gorenstein dimension) whose tensor products nonetheless have finite projective dimension. Furthermore, we establish nontrivial conditions under which such examples cannot arise. For example, we prove that if the tensor product of two nonzero modules -- at least one of which is totally reflexive -- has finite projective dimension, then both modules in question must have finite projective dimension.
\end{abstract}


\section{Introduction}

Throughout $R$ denotes a commutative Noetherian ring and all $R$-modules are assumed to be finitely generated. If $R$ is assumed to be a local ring, then $\fm$ and $k$ denote the unique maximal ideal of $R$ and the residue field of $R$, respectively. We refer the reader to \cite{Av2, BH, Gdimbook} for any unexplained terminology in this paper and adopt the conventions that $\depth_R(0)=\infty$, $\dim_R(0)=-1$, and $\pd_R(0)=-\infty$; see \cite{Au, HW1}.

This paper originated from our discussions with Roger Wiegand, who informed us that the following question was raised at a commutative algebra meeting:

\begin{ques} \label{q1} Let $R$ be a commutative ring. If $M$ and $N$ are $R$-modules such that $\pd_R(M)<\infty$ and $\pd_R(N)<\infty$, then must $\pd_R(M\otimes_RN)<\infty$? What if $M=N$?
\end{ques}

It is not difficult to find counterexamples to Question \ref{q1}; however, there are also several special affirmative cases that are of interest to us. For example, we observed that, if $R$ is a $d$-dimensional Cohen-Macaulay local ring and $M$ is an $R$-module that is locally free on the punctured spectrum of $R$ such that $\pd_R(M)\leq d/2$, then $\pd_R(M\otimes_RM)<\infty$. One such example is the case where $R=k[\![x,y,z]\!]/(xy-z^2)$ and $M$ is the ideal of $R$ generated by $x$ and $y$; see \ref{cor3} and \ref{exCD} in the appendix for the details.

Wiegand \cite{RW} proved that, Question \ref{q1} has an affirmative answer if and only if the ring considered is regular or has depth zero. Wiegand's argument is inspiring to us; we state his theorem below and provide its proof in the appendix.

\begin{thm} [{Wiegand \cite{RW}}] \label{Roger} Let $R$ be a local ring. Then the following are equivalent:
\begin{enumerate}[\rm(i)]
\item $\depth(R)=0$ or $R$ is regular. 
\item For each $R$-module $M$ with $\pd_R(M)<\infty$, it follows that $\pd_R(M\otimes_RM)<\infty$.
\item For all cyclic $R$-modules $M$ and $N$ with $\pd_R(M)<\infty$ and $\pd_R(N)<\infty$, it follows that $\pd_R(M\otimes_RN)<\infty$.
\end{enumerate}
\end{thm}  

Wiegand \cite{RW}, motivated by Question \ref{q1} and Theorem \ref{Roger}, raised the following question:

\begin{ques} [{Wiegand \cite{RW}}] \label{soru1} Let $R$ be a commutative ring. If $M$ and $N$ are $R$-modules such that $\pd_R(M\otimes_RN)<\infty$, then must $\pd_R(M)<\infty$ or $\pd_R(N)<\infty$? 
\end{ques}

As in the case for Question \ref{q1}, affirmative answers to Question \ref{soru1} are known. For example, if $M\otimes_RN$ is nonzero and free, then $M$ and $N$ must be both projective; see \ref{Free}. Moreover, Question \ref{soru1} holds if the derived tensor product $M\tensor_R N$ of $M$ and $N$ is considered \cite[1.5.3(a)]{AFGD}. On the other hand, the question is not true in general, and one of our aims in this paper is to construct examples that provide a negative answer to Question \ref{soru1}. In fact, we show that the finiteness of the projective dimension of a nonzero tensor product $M\otimes_RN$ over a local ring $R$ does not necessarily imply the finiteness of the Gorenstein dimension (or polynomial growth of Betti numbers) of $M$ or $N$; see Examples \ref{Tos} and \ref{Tos2}. Additionally, we note that Question \ref{soru1} has a negative answer in the case where the projective dimension is replaced by the injective dimension; see Remark \ref{rmksemi}.

In addition to providing counterexamples, we point out some conditions under which Question \ref{soru1} is true; see the Appendix. Most of these conditions are immediate consequences of known results from the literature, but they motivate us to further investigate the projective dimension of tensor products and seek new, nontrivial conditions that imply Question \ref{soru1} holds. One such result, which we prove in this direction, is the following:

\begin{thm}\label{main1} Let $R$ be a ring and let $M$ and $N$ be nonzero $R$-modules. Assume:
\begin{enumerate}[\rm(a)]
\item $\Max \Spec(R)\subseteq \Supp_R(M)\cap \Supp_R(N)$ (for example, $R$ is local). 
\item $\pd_R(M\otimes_R N)\leq n$ for some $n\geq 0$.
\end{enumerate}
Then the following hold:
\begin{enumerate}[\rm(i)]
\item If $\Tor_i^R(M,N)=0$ for all $i=1, \ldots, n$, then $\pd_R(M)+ \pd_R(N)\leq n$.
\item If $\Ext^i_R(M,R)=0$ for all $i=1, \ldots, n$, then $M$ is projective and $\pd_R(N)<\infty$. 
\end{enumerate}
\end{thm} 

The proofs of the first and the second parts of Theorem \ref{main1} are entirely distinct, and are given in Section 3. The conclusion of Theorem \ref{main1}(i) is an extension of the following fact: If $M$ and $N$ are nonzero modules over a local ring $R$ such that $\pd_R(M\otimes_RN)\leq n$ for some $n\geq 0$ and $\Tor_i^R(M,N)=0$ for all $i\geq 1$, then $\pd_R(M)+\pd_R(N)\leq n$; see \ref{pdsum}(i). An immediate consequence of Theorem \ref{main1}(i) is the following; see also Corollary \ref{newcor1}.

\begin{cor} \label{corintronew1} If $M$ and $N$ are nonfree modules over a local ring $R$ such that $\pd_R(M\otimes_RN)=1$, then $\Tor_1^R(M,N)\neq 0$. 
\end{cor}

The vanishing hypothesis of Theorem \ref{main1}(ii) holds for totally reflexive modules: If $M$ is a totally reflexive $R$-module, then $\Ext^i_R(M,R)=0$ for all $i\geq 1$; see \cite[4.2.6]{Gdimbook}. Hence, the theorem yields the following result which is advertised in the abstract:

\begin{cor} \label{corintro1} Let $R$ be a  ring and let $M$ and $N$ be nonzero $R$-modules. Assume $M$ is totally reflexive. If $\pd_R(M\otimes_R N)<\infty$, then $M$ is projective and $\pd_R(N)<\infty$. 
\end{cor}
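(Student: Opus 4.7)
The plan is to apply Theorem \ref{main1}(ii) directly; no extra work beyond translating the totally reflexive hypothesis into the Ext-vanishing hypothesis required by that theorem should be needed. First I would fix an integer $n \geq 0$ with $\pd_R(M\otimes_R N) \leq n$, which is possible precisely because $\pd_R(M\otimes_R N) < \infty$ (and one can safely take $n$ to be this value, or any larger non-negative integer if the projective dimension happens to be $-\infty$, i.e.\ if $M\otimes_R N = 0$).

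Next, I would invoke the fact recorded in the paragraph preceding the corollary: since $M$ is finitely generated, being totally reflexive is equivalent to being Gorenstein-projective by \cite[4.2.6]{Gdimbook}, and both notions directly include the Ext-vanishing $\Ext^i_R(M,R) = 0$ for all $i \geq 1$. In particular this vanishing holds for $i = 1, \ldots, n$, so the hypothesis of Theorem \ref{main1}(ii) is satisfied. Its conclusion then says precisely that $M$ is projective and $\pd_R(N) < \infty$, which is what we want.

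The only potential obstacle is a minor bookkeeping issue about the convention $\pd(0) = -\infty$: if $M\otimes_R N$ happens to be zero, the inequality $\pd_R(M\otimes_R N)\leq n$ still holds for every $n \geq 0$, so Theorem \ref{main1}(ii) still applies. Hence the argument is a direct specialization, and all the real content lives in Theorem \ref{main1}(ii), whose proof is given in Section 3.
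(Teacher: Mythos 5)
Your proof is correct and is exactly the paper's argument: the paragraph immediately preceding Corollary \ref{corintro1} observes that a totally reflexive (equivalently, Gorenstein-projective, since modules are finitely generated \cite[4.2.6]{Gdimbook}) module $M$ satisfies $\Ext^i_R(M,R)=0$ for all $i\geq 1$, so Theorem \ref{main1}(ii) applies at once. The extra bookkeeping you supply, including the remark on the convention $\pd(0)=-\infty$, matches the paper's conventions and does not alter the substance of the derivation.
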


Note that Corollary \ref{corintro1} may fail if $M$ is not totally reflexive, even if $M$ has finite Gorenstein dimension; see Example \ref{Tos}. We also proved a result similar to Corollary \ref{corintro1} for Ulrich modules over Cohen-Macaulay local rings; see Corollary \ref{c37} and the paragraph preceding it.

A special, albeit important, case of our second main result concerning Question \ref{soru1} is the following theorem; see Theorem \ref{semimain} and \Cref{1.6prf}.

\begin{thm}\label{main2} Let $R$ be a Cohen-Macaulay local ring and let $M$ and $N$ be $R$-modules such that $M=\Omega_R L$ for some maximal Cohen-Macaulay $R$-module $L$. If $\pd_R(M\otimes_R N)<\infty$, then $M=0$ or $N=0$.
\end{thm}


The conclusion of Theorem \ref{main2} also holds if the projective dimension is replaced by the injective dimension; see Corollary \ref{corend}. However, it is worth noting that the conclusion of the theorem may fail if $L$ is not maximal Cohen-Macaulay, even if $M$ is a syzygy module. For example, if $R=k[\![x,y,z]\!]/(xy-z^2)$, $L=R/(x,y)$, and $M=N=\Omega_R L$, then $N=(x,y)R\neq 0$, but $\pd_R(M\otimes_RN)=2=\id_R(M\otimes_RN)$; see \cite[2.7]{CET} or Remark \ref{easy}.


\section{Counterexamples to Questions \ref{q1} and \ref{soru1}}

\subsection*{Some examples about Question \ref{q1}} In this subsection we construct examples that corroborate Theorem \ref{Roger} and give a negative answer to Question \ref{soru1}. 

Example \ref{exRoger} is due to Wiegand \cite{RW}. The first part of the example is included here to emphasize the fact that one cannot replace "or" with "and" in Question \ref{soru1}; see also Proposition \ref{cor3} concerning the second part of the example. 

\begin{eg} \label{exRoger} Let $R=k[\!|x,y]\!]/(xy)$ and let $M=R/(x+y)$.
\begin{enumerate}[\rm(i)]
\item Let $N=R/(x^2)$. Then it follows $M\otimes_RN\cong M$ so that $\pd_R(M\otimes_RN)=\pd_R(M)=1$ since $x+y$ is a non zero-divisor on $R$. On the  other hand, since $x^2$ is a zero-divisor on $R$, we see that $\pd_R(N)=\infty$. 
\item Let $N=R/(x-y)$. Then, since $x+y$ and $x-y$ are both non zero-divisors on $R$, it follows that $\pd_R(M)=1=\pd_R(N)$. Furthermore we have that $\pd_R(M\otimes_RN)=\infty$. 
\end{enumerate}
\end{eg}

We make use of the next lemma to obtain Example \ref{exYO} which gives a negative answer to Question \ref{soru1} over a ring that is not Cohen-Macaulay. 

\begin{lem} \label{YO} Let $R$ be a local ring such that $\depth(R)=1$. If $R$ is not regular, then there is an $R$-module $M$ such that $\pd_R(M)=1$ and $\pd_R\big(M^{\otimes n})=\infty$ for each $n\geq 2$.
\end{lem}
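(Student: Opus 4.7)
The plan is to produce $M$ as a direct sum $R/(x)\oplus R/(y)$ for two nonzerodivisors $x,y\in\fm$ whose images in $\fm/\fm^2$ are linearly independent. First I would observe that $\fm$ cannot be principal: if $\fm=(t)$, then $\depth R\geq 1$ forces $t$ to be a nonzerodivisor, so by Krull's principal ideal theorem $\dim R\leq 1$, which together with $\depth R=1$ would make $R$ a discrete valuation ring---contradicting the non-regularity assumption. In particular $\dim_k(\fm/\fm^2)\geq 2$.

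Next I would apply prime avoidance twice to extract $x$ and $y$. Since $\fm^2\subsetneq\fm$ by Nakayama and $\fm\not\subseteq\fp$ for every $\fp\in\Ass R$ (as $\fm\notin\Ass R$), prime avoidance---with $\fm^2$ the lone non-prime ideal in the cover---produces a nonzerodivisor $x\in\fm\setminus\fm^2$. Since $\fm$ is not principal, $(x)+\fm^2\subsetneq\fm$, and a second application of prime avoidance, now with the non-prime ideal $(x)+\fm^2$ together with the associated primes of $R$, yields a nonzerodivisor $y\in\fm\setminus((x)+\fm^2)$. The classes $\overline{x},\overline{y}\in\fm/\fm^2$ are then linearly independent, which forces $\mu((x,y))\geq 2$, so $(x,y)$ is not principal.

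Set $M=R/(x)\oplus R/(y)$. Because $x$ and $y$ are nonzerodivisors, each summand has projective dimension one, so $\pd_R M=1$. Distributing tensor products over direct sums yields
\[
M^{\otimes n} \;\cong\; \bigoplus_{(\epsilon_1,\ldots,\epsilon_n)\in\{x,y\}^n} R/(\epsilon_1,\ldots,\epsilon_n),
\]
and any mixed tuple contributes a summand isomorphic to $R/(x,y)$; such tuples exist as soon as $n\geq 2$. It therefore suffices to show $\pd_R R/(x,y)=\infty$. If this were finite, the Auslander--Buchsbaum formula would give $\pd_R R/(x,y)\leq\depth R=1$, so the presentation $0\to(x,y)\to R\to R/(x,y)\to 0$ would realize $(x,y)$ as a free submodule of $R$. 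Since $R$ is commutative, any free $R$-submodule of $R$ has rank at most one, so $(x,y)$ would be principal---a contradiction.

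The crux of the plan is the construction of the nonzerodivisors $x,y$ with linearly independent classes modulo $\fm^2$; once they are in hand, both the computation of $\pd_R M$ and the extraction of $R/(x,y)$ as a summand of $M^{\otimes n}$ are routine. The key structural inputs are that $\depth R\geq 1$ keeps every associated prime strictly inside $\fm$, and non-regularity forces $\fm$ to be minimally generated by at least two elements.
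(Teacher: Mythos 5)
Your proof is correct and follows essentially the same strategy as the paper's: set $M$ to be a direct sum of cyclic modules $R/(x_i)$ over nonzerodivisors, note that a mixed factor $R/(x_1,x_2)$ appears as a direct summand of $M^{\otimes n}$ for $n\geq 2$, and derive a contradiction from the fact that finite projective dimension would force the two-generated ideal $(x_1,x_2)$ to be principal. The only difference is cosmetic: you build just two nonzerodivisors by a two-step prime-avoidance argument ensuring their images in $\fm/\fm^2$ are independent, whereas the paper invokes its Fact \ref{F1} to produce a full minimal generating set of $\fm$ consisting of nonzerodivisors and then uses the first two of them.
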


\begin{proof} Assume $R$ is not regular. Note that $\fm$ can be minimally generated by $\{x_1, \ldots, x_s\}$ for some elements $x_i$ of $R$, each of which is a non zero-divisor on $R$; see \ref{F1}. Therefore, by setting $M=R/(x_1)\oplus \cdots \oplus R/(x_s)$, we see that $\pd_R(M)=1$. 

Suppose $\pd_R\big(M^{\otimes n})<\infty$ for some $n\geq 2$. Then $k\cong R/(x_1, \ldots, x_s)$, being a direct summand of $M^{\otimes n}$, has finite projective dimension, which implies that $R$ is regular. Thus $\pd_R\big(M^{\otimes n})=\infty$ for each $n\geq 2$.
\end{proof}


\begin{eg} \label{exYO} Let $R=k[\![x,y]\!]/( xy)$ , where $k$ is a field of odd characteristic, and let $M=R/(x+y) \oplus R/(x-y)$. Then $\pd_R(M)=1$ and $\pd_R\big(M^{\otimes n})=\infty$ for each $n\geq 2$ as $\fm=(x+y , x-y)$ and $x+y, x-y$ are non-zero-divisors on $R$; see (the proof of) Lemma \ref{YO}. 
\end{eg}  

In Examples \ref{exRoger} and \ref{exYO}, the modules considered have projective dimension one. Next, in Example \ref{ex1}, we build on \cite[2.5]{CET}  and obtain an example of a tensor product of infinite projective dimension, where one of the modules in question has projective dimension three.

%
%
%
%
%
%

\begin{eg} \label{ex1} Let $R=k[\!|x,y,z,w]\!]/(xy)$ and let $\fp=(y,z,w)$. Then $R$ is a three-dimensional hypersurface and $\fp$ is a prime ideal of $R$. Set $M=R/(z, w, x+y)$, $N=\Tr(R/\fp)$ and $X=M\oplus N$. Then it follows that $\pd_R(M)=3$, $\pd_R(N)=1$, and $\pd_R(M\otimes_RN)=\infty$. Moreover we have $\pd_R(X)=3$ and $\pd_R(X\otimes_RX)=\infty$.

To establish these claims, first we note that $\{z, w, x+y\}$ is an $R$-regular sequence. So $\pd_R(M)=3$. Moreover, as $M$ is cyclic, we see that $\pd_R(M\otimes_RM)=3$.  Also, since $N$ is torsion-free, we conclude that $\pd_R(N\otimes_RN)=2$; see, for example, \ref{cor1}.  

There is a short exact sequence of the form:
$$ 0\to R \stackrel{ \begin{pmatrix} y  \\ z \\w \\ \end{pmatrix}} {\longrightarrow} R^{\oplus 3} \to N \to 0.$$
We obtain, by tensoring this short exact sequence with $M$, the following exact sequence of $R$-modules:
$$ M \stackrel{\begin{pmatrix} y  \\ 0 \\ 0 \\ \end{pmatrix}} {\longrightarrow} M^{\oplus 3} \to M\otimes_RN \to 0.$$
This exact sequence implies that $M\otimes_RN \cong M^{\oplus 2} \oplus (M/yM) \cong M^{\oplus 2} \oplus k$. Therefore we conclude that $\pd_R(M\otimes_RN)=\infty$.
Now we set $X=M\oplus N$. Then $\pd_R(X)=3$ and $\pd_R(X\otimes_RX)=\infty$ because $M\otimes_RN$ is a direct summand of $X\otimes_RX$. 
\end{eg}  

\subsection*{Some examples about Question \ref{soru1}} In this section we construct two examples giving a negative answer to Question \ref{soru1}; see also \cite[section 5]{Ce18} for some examples similar in flavor examining the finiteness of Gorenstein dimension of tensor products of modules. 

The following facts are used in Example \ref{Tos}. The first part is used in the proof of Theorem \ref{main1}(i), while the second part is required for the argument in Example \ref{Tos2}.

\begin{chunk} \label{pdsum} \label{GoodFact} Let $R$ be a local ring.
\begin{enumerate}[\rm(i)]
\item Let $M$ and $N$ be $R$-modules such that $\Tor_i^R(M,N)=0$ for all $i\geq 1$. It follows that $\pd_R(M\otimes_RN)=\pd_R(M)+\pd_R(N)$; see  \cite[A.7.6 and A.7.4.1]{Gdimbook} and \cite[1.1]{Mi}.
\item Let $0\neq x \in \fm$. Then $x$ is a non zero-divisor on $R$ if and only if $\pd_R(R/xR)<\infty$; see \cite[6.3]{AB58} and \cite[1.2.7(2)]{Av2}. 
\end{enumerate}
\end{chunk}

%

\begin{eg} \label{Tos} Let $R=k[\![x,y,z]\!]/(x^2)$, $M=R/(xy, z)$, and let $N=R/(xz, y)$. Then it follows that $\pd_R(M\otimes_RN)=2$ and
 $\pd_R(M)=\infty=\pd_R(N)$.

To establish these claims, first we note that $\{y,z\}$ is an $R$-regular sequence. Therefore, as $M\otimes_RN \cong R/(y,z)$, we conclude that $\pd_R(M\otimes_RN)=2$.

We have that $M=T_1 \otimes_R T_2$, where $T_1=R/(xy)$ and $T_2=R/(z)$. As $z$ is a non zero-divisor on both $R$ and $T_1$, we conclude that $\Tor_1^R(T_1, T_2)=0$. Furthermore, as $\pd_R(T_2)=1$, it follows that $\Tor_i^R(T_1, T_2)=0$ for all $i\geq 1$. Therefore, $\pd_R(T_1)+\pd_R(T_2)=\pd_R(T_1 \otimes_R T_2)=\pd_R(M)$; see \ref{pdsum}(i). As $\pd_R(T_2)<\infty$, we conclude that $\pd_R(M)=\infty$ if and only if $\pd_R(T_1)=\infty$. However, if $\pd_R(T_1)<\infty$ if and only $xy$ is a non zero-divisor on $R$; see \ref{GoodFact}(ii). As $x(xy)=0$, we see that $xy$ is a zero-divisor so that $\pd_R(T_1)=\infty$ and $\pd_R(M)=\infty$.

Note that $N=T_3 \otimes_R T_4$, where $T_3=R/(xz)$ and $T_4=R/(y)$. Furthermore, it follows that $y$ is a non zero-divisor on $R$ and $T_4$, and $xz$ is a zero-divisor on $R$. Therefore, by using a similar argument we used for $M$, we conclude that $\pd_R(N)=\infty$.
\end{eg}

\begin{eg} \label{Tos2} Let $R=k[\!|x,y,z,w]\!]/(x^2,xy,y^2)$, $M=R/(xw,z)$, and let $N=R/(xz,w)$. Then it follows that $\pd_R(M\otimes_RN)=2$, and $\G-dim_R(M)=\infty=\G-dim_R(N)$.

To see these, note that $M\otimes_RN=R/(xw,z,xz,w)=R/(z,w)$; hence $\pd_R(M\otimes_RN)=2$ since $\{z,w\}$ is an $R$-regular sequence.

We can write $M=T/zT$, where $T=R/(xw)$. Note that $z$ is a non zero-divisor on $T$. Hence $\G-dim_R(M)<\infty$ if and only if $\G-dim_R(T)<\infty$; see, for example, \cite[1.2.9]{Gdimbook}.

Next we note that $R$ is Golod since it is a non-Gorenstein ring which has codimension two; see \cite[(5.0.1) and 5.3.4]{Av2}. So, if $\G-dim_R(T)<\infty$, then it follows $\Ext^{i}_R(T,R)=0$ for all $i\gg 0$ \cite[1.2.7]{Gdimbook}, which implies that $\pd_R(T)<\infty$ and $xw$ is a non zero-divisor on $R$; see \ref{GoodFact}(ii). As $xw$ is a zero-divisor on $R$, we conclude that $\G-dim_R(T)=\infty$, that is, $\G-dim_R(M)=\infty$. Similarly we can observe that $\G-dim_R(N)=\infty$.

Finally, we note that $M$ and $N$ both have infinite complexity \cite{Av1}. More precisely, the Betti numbers of both $M$ and $N$ grow exponentially as these modules do not have finite projective dimension and $R$ is a Golod ring; see \cite[5.3.3(2)]{Av2} for more details.
\end{eg}

\begin{rmk} The conclusion of Example \ref{Tos2}, in relation to Question \ref{soru1}, is stronger than that of Example \ref{Tos}. Example \ref{Tos2} provides two modules, $M$ and $N$, over a local ring $R$ which is not Gorenstein, where both $M$ and $N$ have infinite Gorenstein dimension (as well as infinite complexity) and $M\otimes_RN$ has finite projective dimension; see also \cite[5.4]{Ce18}.
\end{rmk}

We conclude this section by pointing out that Question \ref{soru1} can fail if the projective dimension is replaced by the injective dimension.

\begin{rmk} \label{rmksemi} Let $R$ be a Cohen-Macaulay local ring with a canonical module $\omega$. Assume $R$ admits a nontrivial semidualizing module $C$, that is, $C\ncong \omega$ and $C\ncong R$; see \cite[2.3.2]{smd} for an example of such a ring $R$. It follows that $\id_R(C)=\infty$; see \cite[2.1.8 and 2.2.13]{smd}. Suppose $\id_R(C^{\dagger})<\infty$, where $(-)^{\dagger}=\Hom_R(-, \omega)$. Then $C^{\dagger} \cong \omega^{\oplus n}$ for some $n\geq 0$. This implies that $C\cong C^{\dagger\dagger} \cong R^{\oplus n}$, that is, $C \cong R$. Therefore, $\id_R(C^{\dagger})=\infty$. On the other hand, since $C\otimes_RC^{\dagger} \cong \omega$, we have that $\id_R(C\otimes_RC^{\dagger})<\infty$; see \cite[3.1.4 and 3.1.10]{smd}. 
\end{rmk}


\section{Proofs of Theorem \ref{main1}, Theorem \ref{main2}, and some corollaries}

\subsection*{Proof of Theorem \ref{main1}} This subsection is devoted to a proof of Theorem \ref{main1}. We start by proving the first part of the theorem:

\begin{proof} [A proof of Theorem \ref{main1}(i)] We assume $\pd_R(M\otimes_R N)\le n$ and $\Tor^R_{i}(M,N)=0$ for all $i=1, \ldots, n$ for some $n\geq 1$, and proceed to show that $\pd_R(M)+\pd_R(N)\leq n$. Recall that we assume $\Max \Spec(R)\subseteq \Supp(M)\cap \Supp(N)$. Thus, by localizing at maximal ideals of $R$, we may assume $R$ is local, and $M$ and $N$ are nonzero.

Let $X_{\bullet}= P_{\bullet} \otimes_R Q_{\bullet}$ be the total tensor product complex with differentials $\partial^{X_{\bullet}}_{\bullet}$, where $P_{\bullet}$ and $Q_{\bullet}$ are the minimal free resolutions of $M$ and $N$, respectively. Note that $\Tor_i^R(M,N)=\hh_i(X_{\bullet})$ for all $i\geq 0$. As $\Tor_i^R(M,N)=0$ for all $i=1, \ldots, n$, the following (minimal) complex is exact at $X_0, \ldots, X_n$:
\begin{equation}\tag{\ref{main1}.1}
 X_{n+1}\xrightarrow{\partial^{X_{\bullet}}_{n+1}}  X_n\xrightarrow{\partial^{X_{\bullet}}_{n}}  \cdots \to X_1 \xrightarrow{\partial^{X_{\bullet}}_1} X_0\to M\otimes_RN \to 0.
\end{equation}
Now, because $\Omega^{n+1}_R(M\otimes_RN)=\im(\partial^{X_{\bullet}}_{n+1})$ and $\pd_R(M\otimes_R N)\le n$, (\ref{main1}.1) implies that $\partial^{X_{\bullet}}_{n+1}=0$. This implies, by the definition of the differential map $\partial^{X_{\bullet}}_{\bullet}$ of the total complex, that $\im (\partial^P_i) \otimes_R Q_{n+1-i}=0$ and $P_{n+1-i}\otimes_R \im(\partial^Q_{i})=0$ for all $i=1,\dots,n+1$. Therefore $\im (\partial^P_{n+1}) \otimes_R Q_0=0$ and $P_0\otimes_R \im(\partial^Q_{n+1})=0$. As $M$ and $N$ are nonzero so are $P_0$ and $Q_0$. Consequently, it follows that $\im(\partial^P_{n+1})=0$ and $\im(\partial^Q_{n+1})=0$, and this implies $\pd_R(M)\le n$ and $\pd_R(N) \le n$. This implies that $\Tor_i^R(M,N)=0$ for all $i\geq 1$ since we already assume the vanishing of $\Tor_i^R(M,N)$ for all $i=1, \ldots, n$. Hence, we conclude from \ref{pdsum}(i) that $\pd_R(M)+\pd_R(N)=\pd_R(M\otimes_RN) \le n$.
\end{proof}

The following observation, a special case of \ref{cor1}(i), is from \cite[2.7]{CET}. We include a proof as the argument is straightforward and does not appeal to  \ref{cor1}(i).

\begin{rmk} \label{easy} Let $R$ be ring and let $M$ and $N$ be $R$-modules such that $\pd_R(M)=1$. 

There is an exact sequence $0\to G \to F \to M \to 0$ for some free $R$-modules $G$ and $F$. This implies that $\Tor_1^R(M,N)$ embeds into a finite direct sum of copies of $N$. Note that, since $\pd_R(M)<\infty$, $\Tor_1^R(M,N)$ is a torsion $R$-module. Thus, if $N$ is torsion-free, it follows that $\Tor_1^R(M,N)=0$, that is, $\Tor_i^R(M,N)=0$ for all $i\geq 1$. Therefore, if $N$ is torsion and $\pd_R(N)<\infty$, then $\pd_R(M\otimes_RN)<\infty$; see \ref{pdsum}.
\end{rmk}

In passing, we record some immediate consequences of Theorem \ref{main1}(i). 

\begin{cor} \label{newcor1} Let $R$ be a local ring, and let $M$ and $N$ be nonfree $R$-modules such that $\pd_R(M\otimes_RN)=1$. 
\begin{enumerate}[\rm(i)]
\item If $\depth_R(M)\geq \depth(R)-1$ and $N$ is torsion-free, then $\pd_R(M)=\infty$.
\item If $\depth(R)=1$ and $N$ is torsion-free, then $\pd_R(M)=\infty$.
\item If $\depth(R)=2$, and $M$ and $N$ are torsion-free, then $\pd_R(M)=\infty=\pd_R(N)$.
\end{enumerate}
\end{cor}

\begin{proof} It suffices to observe part (i). Suppose $\pd_R(M)<\infty$. Then, as $M$ is not free and $\depth_R(M)\geq \depth(R)-1$, the Auslander-Buchsbaum formula implies that $\pd_R(M)=1$.
So, Remark \ref{easy}  implies that $\Tor_1^R(M,N)=0$.  This is a contradiction in view of Corollary \ref{corintronew1}.
\end{proof}

The conclusion of Corollary \ref{newcor1}(ii) can fail if $N$ is not torsion-free: If $R=k[\!|x,y]\!]/(xy)$, $M=R/(x+y)$, and $N=R/(x^2)$, then $\depth(R)=1=\pd_R(M)$ and $N$ has torsion (the torsion submodule of $N$ is isomorphic to $k$); see Example \ref{exRoger}(i). Similarly, for Corollary \ref{newcor1}(iii) to hold, we need that the tensor product has projective dimension one; see the paragraph following Theorem \ref{main2}. Moreover, we do not know if there is an example as in part (iii) of Corollary \ref{newcor1}. Therefore, we ask:

\begin{ques} Are there nonfree torsion-free modules $M$ and $N$ over a local ring $R$ such that $\depth(R)=2$ and $\pd_R(M\otimes_RN)=1$?
\end{ques}

We proceed to prove the second part of Theorem \ref{main1}. In fact, we give three distinct proofs for Theorem \ref{main1}(ii), each of which seems to be of independent interest. Our first proof relies upon the next two lemmas:

\begin{lem}\label{f11} Let $R$ be a local ring and let $M$ be an $R$-module. If $\Ext^1_R\big(M, \Omega_R(M\otimes_RN)\big)=0$ for some nonzero $R$-module $N$, then $M$ is free.
\end{lem}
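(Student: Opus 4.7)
The strategy is to use the $\Ext^1$-vanishing to lift a canonical map $M\to M\otimes_R N$ through a free cover of $M\otimes_R N$, and then to argue via Nakayama's lemma that this lift identifies $M$ with a free direct summand of the cover. The key input will be a careful choice of an element $n\in N$ outside $\fm N$.

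First, fix a short exact sequence
\[
0\to \Omega\to F\to M\otimes_R N\to 0
\]
with $F$ free, where $\Omega=\Omega_R(M\otimes_R N)$. Applying $\Hom_R(M,-)$ and using the hypothesis $\Ext^1_R(M,\Omega)=0$, the induced map $\Hom_R(M,F)\twoheadrightarrow \Hom_R(M,M\otimes_R N)$ is surjective. Since $N\neq 0$, Nakayama's lemma supplies $n\in N$ whose image $\bar n\in N/\fm N$ is nonzero. The $R$-linear map $\phi_n\colon M\to M\otimes_R N$, $m\mapsto m\otimes n$, then lifts to some $\psi_n\colon M\to F$ with $\pi\circ\psi_n=\phi_n$, where $\pi\colon F\twoheadrightarrow M\otimes_R N$.

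The crucial step is to show that $\psi_n\otimes_R k\colon M/\fm M\to F/\fm F$ is injective. Reducing $\phi_n$ modulo $\fm$ gives the $k$-linear map
\[
M/\fm M\to (M\otimes_R N)\otimes_R k\cong (M/\fm M)\otimes_k (N/\fm N),\qquad \bar m\mapsto \bar m\otimes \bar n,
\]
which is injective because $\bar n$ is a nonzero vector in $N/\fm N$. Since $\phi_n\otimes_R k=(\pi\otimes_R k)\circ(\psi_n\otimes_R k)$ is injective, so is $\psi_n\otimes_R k$.

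Finally, take minimal generators $m_1,\ldots,m_r$ of $M$ with $r=\mu(M)$. Their images $\psi_n(m_1),\ldots,\psi_n(m_r)\in F$ have $k$-linearly independent reductions in $F/\fm F$, so by the standard Nakayama extension-of-basis argument they are part of a free $R$-basis of $F$. Hence $G:=\bigoplus_{i=1}^r R\psi_n(m_i)$ is a free direct summand of $F$ of rank $r$, and $\psi_n$ restricts to a surjection $M\twoheadrightarrow G$. This restriction is also injective: if $\sum a_i m_i\in \ker\psi_n$, then $\sum a_i\psi_n(m_i)=0$ in the free module $G$ forces $a_i=0$ for every $i$. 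Therefore $\psi_n\colon M\xrightarrow{\sim}G\cong R^r$, and $M$ is free.

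The main obstacle is that a priori the lift $\psi_n\colon M\to F$ need not be injective nor have image a direct summand; the resolution is to exploit the freedom in $n$ by choosing it outside $\fm N$, which uses only that $N\neq 0$ yet is exactly what forces $\phi_n$, and hence $\psi_n$, to become injective after reducing modulo~$\fm$.
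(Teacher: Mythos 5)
Your argument is correct, and it is a genuinely different proof from the one in the paper. Both proofs begin in the same way: apply $\Hom_R(M,-)$ to the syzygy sequence $0\to\Omega_R(M\otimes_RN)\to F\to M\otimes_RN\to 0$ and use the hypothesis $\Ext^1_R(M,\Omega_R(M\otimes_RN))=0$ to conclude that the induced map $\Hom_R(M,F)\to\Hom_R(M,M\otimes_RN)$ is surjective, that is, every homomorphism $M\to M\otimes_RN$ lifts through the free cover $F$. The paper then recasts this in the language of the Auslander transpose: the lifting property says $\underline{\Hom}_R(M,M\otimes_RN)=0$, which by Yoshino's isomorphism is equivalent to $\Tor_1^R(\Tr M, M\otimes_RN)=0$, and the paper quotes a result of Kimura--Otake--Takahashi to conclude that this Tor vanishing, together with $N\neq 0$, forces $M$ to be free. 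You instead stay self-contained and elementary: rather than working with the whole stable Hom group, you lift a single carefully chosen map $\phi_n(m)=m\otimes n$ with $n\in N\setminus\fm N$, observe that $\phi_n\otimes_R k$ is injective because $\bar{n}$ is a nonzero vector of $N/\fm N$, deduce that the lift $\psi_n\colon M\to F$ has injective reduction modulo $\fm$, and read off via the usual Nakayama argument that $\psi_n$ identifies $M$ with a free direct summand of $F$. The paper's route is shorter on the page but rests on two cited results; yours is longer but requires nothing external, and in effect gives a direct proof of the special case of the Kimura--Otake--Takahashi result that is needed here. Both are valid.
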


\begin{proof} It suffices to observe that $\Tor_1^R(\Tr M, M\otimes_R N)=0$, where $\Tr M$ is the Auslander transpose of $M$; see \cite[3.3(1)]{KOT}. As $\Tor_1^R(\Tr M, M\otimes_R N) \cong 
\underline{\Hom}_R(M, M\otimes_RN)$ \cite[3.9]{Yo}, we apply $\Hom_R(M, -)$ to the syzygy short exact sequence $0 \to \Omega_R(M\otimes_RN) \to F \to M\otimes_RN \to 0$, where $F$ is free, and see that 
the induced map $\Hom_R(M, F) \to \Hom_R(M , M\otimes_R N)$ is surjective. This implies that each $R$-module homomorphism $M \to M\otimes_R N$ factors through $F$, that is, 
$\underline{\Hom}_R(M, M\otimes_RN)=0$.
\end{proof}

We omit the proof of the following observation which can be proved by induction on $r$.

\begin{lem} \label{lm} Let $A$ and $B$ be $R$-modules and let $r\geq 0$ be an integer. Assume there is an exact sequence of $R$-modules
$0 \to A_r \to \cdots \to A_0 \to B \to 0$, where $A_i$ is a direct summand of a finite direct sum of copies of $A$ for all $i=0,\dots,r$. If $M$ is an $R$-module such that $\Ext^i_R(M, A)=0$ for all $i=1, \ldots, r+1$, then $\Ext^1_R(M,B)=0$.
\end{lem}

We make one more observation before proving \Cref{main1}(ii)

\begin{chunk}\label{new1} If $M$ and $N$ are $R$-modules such that $\Max \Spec(R)\subseteq \Supp(M)$, $M$ is projective, and $\pd_R(M\otimes_R N)<\infty$, then $\pd_R(N)<\infty$. Indeed, $\pd_{R_{\fm}} (N_{\fm}\otimes_{R_{\fm}}M_{\fm}) \le \pd_R(M\otimes_R N)$ for all $\fm \in \Max \Spec(R)$. So, for each $\fm \in \Max \Spec(R)$, it follows that $\pd_{R_{\fm}} (M_{\fm}) \le \pd_R(M\otimes_R N)$ since each $N_{\fm}$ is a non-zero free $R_{\fm}$-module.
\end{chunk}

We now give a proof of Theorem \ref{main1}(ii):

\begin{proof}[Proof of Theorem \ref{main1}(ii)] We have that $\Max \Spec(R)\subseteq \Supp(M)\cap \Supp(N)$. Thus, localizing at maximal ideals of $R$, we can assume $R$ is local, $M$ and $N$ are nonzero, and $\pd_R(M\otimes_R N)\le n$. Hence, it is enough to prove that $M$ is free; see \ref{new1}.  Note that $\pd_R\big(\Omega_R(M\otimes_R N)\big)\le n-1$, and hence $n-\pd_R(\Omega_R(M\otimes_R N))\geq 1$. Thus, Lemma \ref{lm} implies that $\Ext^1_R(M, \Omega_R(M\otimes_RN))=0$. Consequently, $M$ is free due to Lemma \ref{f11}.
\end{proof}

Next we will provide alternative proofs for the second part of Theorem \ref{main1}. Before we proceed to the proofs, we establish the following theorem.

\begin{thm} \label{injprop0} Let $R$ be a local ring and let $M$, $X$, and $Y$ be $R$-modules such that $X\neq 0$ and $\pd_R(X)=r<\infty$. Assume the following conditions hold:
\begin{enumerate}[\rm(i)]
\item There is a surjective $R$-module homomorphism $M\xtwoheadrightarrow{\alpha} X\otimes_RY$.
\item If $r\geq 1$, then $\Ext^i_R(M, Y)=0=\Tor_j^R(X,Y)$ for all $i=1, \ldots, r$ and all $j\geq 1$.
\end{enumerate}
Then there is a surjective $R$-module homomorphism $M \twoheadrightarrow Y$.
\end{thm}

\begin{proof} Suppose $r=0$. Then $X$ is free, and hence $X\otimes_R Y$ is isomorphic to a finite direct sum of copies of $Y$. Thus $X\otimes_R Y$ surjects onto $Y$. Consequently, $M$ surjects onto $Y$, as required. Hence, we may assume $r\geq 1$.

Let $0 \to R^{\oplus n_r} \to \cdots \to R^{\oplus n_0} \to X \to 0$ be a minimal free resolution of $X$. Note, since $X\neq 0$, we have that $n_0 \neq 0$. As $\Tor_i^R(X,Y)=0$ for all $i\geq 1$, we obtain the exact sequence $$0 \to Y^{\oplus n_r} \to \cdots \xrightarrow{\pi} Y^{\oplus n_0} \to X\otimes_RY \to 0,$$ where $\im(Y^{\oplus n_{i+1}} \to Y^{\oplus n_{i}}) \subseteq \fm Y^{\oplus n_{i}}$ for all $i=0, \ldots, r-1$. Then we consider the exact sequences:
$$0 \to Y^{\oplus n_r} \to \cdots \to Y^{\oplus n_1} \to \im(\pi) \to 0 \text{ and } 0 \to \im(\pi) \xrightarrow{f}  Y^{\oplus n_0} \xrightarrow{p}  X\otimes_RY \to 0.$$ 
We look at the following pullback (commutative) diagram:
\begin{center}
$\xymatrix{
0 \ar[r]& \im(\pi) \ar@{=}[d] \ar[r] & \exists \; P \ar[d] \ar[r] & M \ar[d]^{\alpha} \ar[r] &  0\\
0 \ar[r]& \im(\pi) \ar[r]^{f} & Y^{\oplus n_0} \ar[r]^{p\;\;\;\;\;\;\;} & X\otimes_RY \ar[r] & 0 
& &  & }$
\end{center}
As $\Ext^i_R(M, Y)=0$ for all $i=1, \ldots, r$, we use Lemma \ref{lm} with $A=Y$ and $B= \im(\pi)$ and conclude that $\Ext^1_R(M,\im(\pi))=0$. Hence the top short exact sequence in the diagram splits, that is, there is a splitting map $M \to P$. Hence taking the composition of this splitting map with the map $P\to Y^{\oplus n_0}$ in the diagram, we obtain an $R$-module homomorphism $\beta: M \to Y^{\oplus n_0}$ such that $\alpha=p \beta$.

Note that, since $\im(f)\subseteq \fm Y^{\oplus n_0}$, it follows that the map $f\otimes 1_k: \im(\pi) \otimes_Rk \to Y^{\oplus n_0}\otimes_Rk$ is the zero map. Therefore $0=\im(f\otimes 1_k)=\ker(p\otimes 1_k)$, that is, $p\otimes 1_k$ is an isomorphism. This implies that $\beta \otimes 1_k$ is surjective since $\alpha\otimes 1_k= (p\otimes 1_k) \circ (\beta \otimes 1_k)$. Consequently, by Nakayama's lemma, $\beta$ is also surjective. So, we obtain a surjection $M \twoheadrightarrow Y^{\oplus n_0}$, as claimed.
\end{proof}

We assemble some basic results which play an important role in the sequel:

\begin{rmk} \label{rmk1} Let $R$ be a ring and let $0 \to A \xrightarrow[]{f} B \xrightarrow[]{g} C \to 0$ be a short exact sequence of $R$-modules. Consider the syzygy exact sequence $0 \to \Omega_R C \to G \xrightarrow[]{\pi}  C \to 0$, where $G$ is a free $R$-module. 
Then, by taking the pullback of the maps $g$ and $\pi$, we obtain an $R$-module homomorphism $h:G \to B$ and a short exact sequence of $R$-modules of the form
$$0 \to \syz_R C \to A\oplus G \xrightarrow[]{[f,\; h]} B  \to 0,$$
where $[f,\; h](x,y)=f(x)+h(y)$ for all $x \in A$ and $y\in G$. 
\end{rmk}


\begin{rmk} \label{rmk2} Let $R$ be a local ring and let $f:X \to Y$ be an $R$-module homomorphism. Assume $X$ has no nonzero free summand and $Y$ is free. Then it follows that $\im(f) \subseteq \fm Y$. 

To establish this, we set $Y=R^{\oplus n}$ for some $n\geq 0$. Then $f=i_1p_1f+ \ldots+ i_np_nf$, where $p_j:Y \to R$ and $i_j:R \to Y$ are the natural projections and injections, respectively. Suppose $\im(p_jf)\nsubseteq \fm$ for some $j$ with $1 \leq j \leq n$. Then $\im(p_jf)=R$ and this gives a surjective $R$-module homomorphism $p_jf: X \twoheadrightarrow R$, which shows that $R$ is a direct summand of $X$. Therefore $\im(p_jf)\subseteq \fm$ for each $j$, and this implies that $\im(f)\subseteq i_1(\fm)+\cdots+i_n(\fm)\subseteq \fm Y$.  \qed
\end{rmk}

We are now ready to provide two more proofs for Theorem \ref{main1}(ii). The first proof we provide relies upon on Theorem \ref{injprop0}, while the second one does not.

\begin{proof}[Alternative proofs of Theorem \ref{main1}(ii)] Recall we assume $\Max \Spec(R)\subseteq \Supp(M)\cap \Supp(N)$. Therefore, localizing at maximal ideals of $R$, it is enough to assume $R$ is local, $M$ and $N$ are nonzero, and $\pd_R(M\otimes_R N)\le n$, and prove that $M$ is free.

We write $M=H \oplus P$ for some free $R$-module $P$ and an $R$-module $H$ which has no nonzero free summand. Hence, it suffices to show $H=0$. Note also that, since $H\otimes_R N$ is a direct summand of $M\otimes N$, it follows that $\pd_R(H\otimes_R N)\leq n$.

(First proof) Suppose $H\neq 0$ and seek a contradiction. Set $r=\mu(N)$. Then there is a surjection $H^{\oplus r}\to H\otimes_R N$. Moreover, we have that $\Ext^{i}_R(H,R)=0$ for all $i=1, \ldots, n$. Hence we use Theorem \ref{injprop0} by setting $X=H\otimes_R N$ and $Y=R$, and obtain a surjection $H^{\oplus r} \twoheadrightarrow  R$. This implies that $R$ is a direct summand of $H^{\oplus r}$. Then $R$ is also a direct summand of $H$; see, for example, \cite[1.2]{BLW}. Thus $H=0$, as required.

(Second proof) We consider the syzygy exact sequence $0 \to \syz_R N \xrightarrow[]{\alpha} F \xrightarrow[]{\beta} N \to 0$, where $F$ is a free $R$-module. 
We tensor this short exact sequence with $H$ and obtain the exact sequence: 
\begin{equation}\tag{\ref{rmk1}.1}
0 \to \ker(1_{M'}\otimes \beta) \xrightarrow[]{\gamma} H\otimes F \xrightarrow[]{1_{H}\otimes \beta} H\otimes N \to 0.
\end{equation}
In view of Remark \ref{rmk1}, the exact sequence (\ref{rmk1}.1) yields an exact sequence of the form
\begin{equation}\tag{\ref{rmk1}.2}
0 \to \syz_R(H\otimes_R N) \to \ker(1_{H}\otimes \beta) \oplus G\xrightarrow[]{[\gamma,\delta]} H\otimes_R F  \to 0, 
\end{equation}
where $G$ is a free $R$-module and $\delta: G \to H\otimes_R F$ is an $R$-module homomorphism.

Note that, since $H\otimes_R N$ is a direct summand of $M\otimes_R N$, it follows that $\Ext_R^i(H\otimes_R F,R)$ is a direct summand of $\Ext_R^i(M\otimes_R F,R)$ for all $i=1,\dots, n$. Hence, we have
\begin{equation}\tag{\ref{rmk1}.3}
\Ext_R^i(H \otimes_R F, R)=0 \text{ for all } i=1,\dots, n.
\end{equation}

As $\pd_R(M\otimes_R N)\le n$, it follows that $\pd_R\big(\Omega_R(M'\otimes_R N)\big)<n$. Set $A=R$, $B=\Omega_R(H\otimes_R N)$, and $r=\pd_R(B)$. Then we conclude from (\ref{rmk1}.3) and Lemma \ref{lm} that $\Ext_R^1(H\otimes_R F, B)$ vanishes. So, the exact sequence (\ref{rmk1}.2) splits and yields a splitting map $$[\epsilon,\phi]^T\colon H\otimes_R F\to \ker(1_{H}\otimes \beta) \oplus G,$$ where $[\gamma,\delta]\circ [\epsilon,\phi]^T=1_{H\otimes_RF}$, that is, $\gamma\epsilon+\delta\phi =1_{H\otimes_RF}$. Therefore, we have:
\begin{equation}\tag{\ref{rmk1}.4}
H\otimes_R F=\im(1_{M'\otimes_RF})=\im(\gamma\epsilon+\delta\phi) \subseteq \im(\gamma\epsilon)+\im(\delta\phi) \subseteq \im(\gamma\epsilon)+ \fm (M'\otimes_RF).
\end{equation}
Here, in (\ref{rmk1}.4), the last inclusion holds due to the fact stated in \ref{rmk2}: as $\phi:H\otimes_R F \to G$ is an $R$-module homomorphism, $H\otimes_R F$ has no nonzero free summand, and $G$ is free, it follows from \ref{rmk2} that $\im(\phi) \subseteq \fm G$.
Now we use Nakayama's lemma and deduce from (\ref{rmk1}.4) that $\im(\gamma\epsilon)=H\otimes_RF$. So, $\gamma\epsilon: M'\otimes_RF \to M'\otimes_RF$, being a surjective map, is injective, that is, $\gamma\epsilon$ is bijective. Thus, $\gamma$ is surjective and $0=\coker(\gamma) \cong H\otimes_RN$; see (\ref{rmk1}.2). This forces $H=0$ since $N\neq 0$.
\end{proof}


\subsection*{A proof of Theorem \ref{main2}} This subsection is devoted to the proof of Theorem \ref{main2}. In fact, we establish a result that is more general than Theorem \ref{main2}; see \Cref{1.6prf}. Before presenting the proof of the theorem, we provide some preliminary results.

\begin{chunk} \label{Yos} Let $R$ be a ring and let $M$ and $N$ be $R$-modules such that $\depth_{R_{\p}}(M_{\p}) \ge \depth (R_{\p})$ for all $\p\in \Spec(R)$. If $\pd_R(N)<\infty$, then $\Tor^R_i(M,N)=0$ for all $i\geq 1$; see \cite[5.3.1 and 5.3.6]{Gdimbook}. 
\end{chunk}

Next we outline some basic properties of semidualizing modules and refer the reader to \cite{smd} for further details:

\begin{chunk}\label{semibasic} Let $R$ be a ring. An $R$-module $C$ is called \emph{semidualizing} if $\Ext_R^i(C,C)=0$ for all $i\ge 1$ and the homothety map $R\to \Hom_R(C, C)$ is an isomorphism. For example, the $R$-module $R$ is semidualizing. Also, if $R$ is a Cohen-Macaulay ring with canonical module $\omega$, then $\omega$ is a semidualizing $R$-module. There are examples of Cohen-Macaulay rings $R$ that admit semidualizing modules which are distinct from $R$ and $\omega$.

Let $C$ be a semidualizing $R$-module. Then the following hold:

\begin{enumerate}[\rm(i)]
\item If  $\{\underline{x}\} \subseteq \fm$ is an $R$-regular sequence, then $C/\underline{x} C$ is a semiduaizing $R/\underline{x}R$-module.
\item If $C$ is semidualizing, then $C$ is faithful so that $\Supp_R(C)=\Spec(R)$.
\item If $C$ is semidualizing and $\p \in \Spec(R)$, then $C_{\p}$ is a semidualizing $R_{\p}$-module. 
\item If $R$ is local and $C$ is semidualizing, then $\depth_R(C)=\depth(R)$.
\item If $C$ is semidualizing, then $\depth_{R_{\p}}(C_{\p})= \depth(R_{\p})$ for all $\p\in \Spec(R)$.
\end{enumerate}
\end{chunk}


The following results from \cite{semi} are used in the proofs of Corollaries \ref{semi1}, \ref{semimain}, and \ref{1.6prf}.

\begin{chunk} \label{semifact} Let $R$ be a ring, $C$ be a semidualizing $R$-module, and let $T$ be an $R$-module.
\begin{enumerate}[\rm(i)]
\item If $\pd_R(T)<\infty$, then $\Hom_R(C, C\otimes_RT)\cong T$; see \cite[1.8(d) and 1.9(b)]{semi}. 
\item Set $X=\Hom_R(C,T)$. If $\pd_R(X)<\infty$, then $C\otimes_RX\cong T$ and $\Ext^i_R(C,T)=0$ for all $i\geq 1$; see \cite[1.8(a, b), 1.9(b) and 2.8(a)]{semi}.
\end{enumerate}
\end{chunk}

Next we recall a beautiful result of Sharp:

\begin{chunk} \label{Sharp} Let $R$ be a Cohen-Macaulay ring with canonical module $\omega_R$ and let $M$ be an $R$-module. If $\id_R(M)<\infty$, then $\pd_R\big(\Hom_R(\omega_R, M)\big)<\infty$. Also, if $\pd_R(M)<\infty$, then $\id_R(M\otimes_R \omega_R)<\infty$; see \cite[2.6 and 2.9]{Sharp71}
\end{chunk}

Recall that, given a module $M$ over a local ring $R$, $\mu_R(M)$ denotes the minimal number of generators of $M$. 

\begin{cor}\label{semi1} Let $R$ be a local ring, $C$ be a semidualizing $R$-module, and let $M$ and $N$ be nonzero $R$-modules. Assume $\pd_R(\Hom_R(C, M\otimes_R N))\le r <\infty$. If $r\geq 1$, assume further $\Ext_R^i(M,C)=0$ for all $i=1,\dots,r$. Then there is a surjective $R$-module homomorphism $M^{\oplus \mu_R(N)} \rightarrow C$.
\end{cor}  

\begin{proof} Set $X=\Hom_R(C, M\otimes_R N)$. As $\pd_R(X)<\infty$, we make use of \ref{semifact}(ii) with $T=M\otimes_RN$ and conclude that $C\otimes_R \Hom_R(C, M\otimes_R N)=C\otimes_RX\cong M\otimes_R N$. 

Note that there is a surjective $R$-module homomorphism $R^{\oplus u}\to N$, where $u=\mu_R(N)$. Tensoring this surjection by $M$, and using the fact that $C\otimes_RX\cong M\otimes_R N$, we obtain a surjective $R$-module homomorphism $M^{\oplus u}\to C\otimes_RX$. Therefore, $M^{\oplus u}$ maps surjectively onto $C$ in case $X$ is free. So, we may assume $r\geq 1$. As $\depth_{R_{\p}}(C_{\p})= \depth(R_{\p})$ for all $\p\in \Spec(R)$, it follows from \ref{Yos} that $\Tor^R_i(X, C)=0$ for all $i\ge 1$; see \ref{semibasic}(v). Hence, we conclude from  \Cref{injprop0} that there is a surjective $R$-module homomorphism $M^{\oplus u} \rightarrow C$. 
\end{proof}

The next result  is an application of \Cref{semi1}.

\begin{cor} \label{injprop} Let $R$ be a Cohen-Macaulay local ring with a canonical module $\omega_R$, and let $M$ and $N$ be nonzero $R$-modules. Assume $M$ is maximal Cohen-Macaulay and $\id_R(M\otimes_RN)<\infty$. Then there is a surjective $R$-module homomorphism $M^{\oplus \mu_R(N)} \rightarrow \omega_R$. 
\end{cor}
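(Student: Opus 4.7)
The plan is to reduce the corollary to an application of Theorem \ref{injprop0}, after first rewriting $M\otimes_RN$ in a convenient form using Sharp's structure theorem for modules of finite injective dimension.

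\textbf{Step 1: Produce a starting surjection.} Since $N$ is a nonzero module over a local ring, a minimal generating set of $N$ yields a surjection $R^{\oplus \mu(N)}\twoheadrightarrow N$; tensoring with $M$ gives a surjection $M^{\oplus \mu(N)}\twoheadrightarrow M\otimes_R N$. Note $M\otimes_R N\neq 0$ by Nakayama, since $M$ and $N$ are both nonzero.

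\textbf{Step 2: Invoke Sharp's theorem.} Sharp \cite{Sharp71} proved that over a Cohen-Macaulay local ring with canonical module $\omega$, a finitely generated module $L$ has finite injective dimension if and only if there exists a finitely generated module $K$ with $\pd_R(K)<\infty$ such that $L\cong \omega\otimes_R K$. Applying this to $L=M\otimes_R N$, which has finite injective dimension by hypothesis, yields an isomorphism $M\otimes_RN\cong \omega\otimes_R K$ for some $K$ with $\pd_R(K)<\infty$; moreover $K\neq 0$ since $M\otimes_R N\neq 0$.

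\textbf{Step 3: Verify the hypotheses of Theorem \ref{injprop0}.} We set $X=K$, $Y=\omega$, and take the starting module to be $M^{\oplus \mu(N)}$ with the surjection constructed in Step 1 (composed with the isomorphism from Step 2) playing the role of $\alpha$. The hypotheses then read:
\begin{itemize}
\item $K\neq 0$ and $\pd_R(K)<\infty$: both established in Step 2.
\item $\Tor_i^R(K,\omega)=0$ for all $i\geq 1$: this follows from \ref{Yos}, since $\omega$ is maximal Cohen-Macaulay and $\pd_R(K)<\infty$.
\item $\Ext^i_R(M^{\oplus \mu(N)},\omega)\cong \Ext^i_R(M,\omega)^{\oplus \mu(N)}=0$ for $i=1,\ldots,\pd_R(K)$: this follows from \ref{Yos} as well, since $M$ is maximal Cohen-Macaulay and $\id_R(\omega)<\infty$.
\end{itemize}
Theorem \ref{injprop0} then delivers a surjective $R$-module homomorphism $M^{\oplus \mu(N)}\twoheadrightarrow \omega$, which is the desired conclusion.

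The main obstacle is recognizing that Sharp's theorem is precisely the right tool to convert the injective-dimension hypothesis on $M\otimes_RN$ into a tensor-product presentation of the form $\omega\otimes_R K$ with $\pd_R(K)<\infty$; once this conversion is done, the hypotheses of Theorem \ref{injprop0} are verified essentially automatically from the two halves of \ref{Yos} applied to the pair (maximal Cohen-Macaulay module, module of finite projective or injective dimension).
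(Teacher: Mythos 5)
Your proposal follows exactly the same route as the paper's proof: tensor the minimal presentation of $N$ with $M$ to get the surjection $M^{\oplus\mu(N)}\twoheadrightarrow M\otimes_RN$, invoke Sharp's theorem to write $M\otimes_RN\cong\omega\otimes_R K$ with $\pd_R(K)<\infty$, verify the Tor and Ext vanishing via \ref{Yos}, and conclude with Theorem \ref{injprop0}. The only (minor and welcome) addition is your explicit remark that $M\otimes_RN\neq 0$ and hence $K\neq 0$, which the paper leaves implicit.
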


\begin{proof} As $\id_R(\omega)<\infty$, it follows from \ref{Sharp} that $\pd_R(\Hom_R(\omega_R, M\otimes_RN))<\infty$. Moreover, $\Ext^i_R(M,\omega_R)=0$ for all $i\geq 1$ because $M$ is maximal Cohen-Macaulay. Since $\omega_R$ is semidualizing, \Cref{semi1} yields the required surjective homomorphism.
\end{proof}

In passing, we recall the following basic facts as they are used in the proof of Theorem \ref{semimain}.

\begin{chunk} \label{AY} Let $R$ be a ring and let $M$ be an $R$-module.
\begin{enumerate}[\rm(i)]
\item The natural map $R \to \Hom_R(M,M)$ is injective if and only if $M$ is faithful over $R$. Hence, if $M$ is faithful over $R$, then $\widehat{M}$ is faithful  over $\widehat{R}$.
\item If $R$ is local and $N$ is an $R$-module such that $\Ext^i_R(M,N)=0$ for all $i\geq 1$, then $\depth_R\big(\Hom_R(M,N)\big)=\depth_R(N)$; see, for example, \cite[4.1]{ArY}.
\end{enumerate}
\end{chunk}


\begin{thm}\label{semimain} Let $R$ be a local ring and let $M$ and $N$ be nonzero $R$-modules. Assume there is an $R$-regular sequence $\{\underline{x}\} \subseteq \fm$ such that $M/\underline{x} M$ is not faithful over $R/\underline{x}R$. 
\begin{enumerate}[\rm(i)]
\item Let $C$ be a semidualizing $R$-module. Then $\pd_R\big(\Hom_R(C, M\otimes_R N)\big)=\infty$ provided that the following condition holds: If $\depth(R)>\depth_R(M\otimes_RN)$, we have that $\Ext_R^{i}(M,C)=0$ for all $i=1,\dots, \depth(R)-\depth_R(M\otimes_RN)$. 
\item If $M$ is maximal Cohen-Macaulay, then $\id_R(M\otimes_RN)=\infty$. 
\end{enumerate}
\end{thm}

\begin{proof} (i) Suppose there is a surjective $R$-module homomorphism $M^{\oplus n} \rightarrow C$ for some $n\geq 1$. This yields a surjective $R/\underline{x}R$-module homomorphism $(M/\underline{x}M)^{\oplus n} \rightarrow C/\underline{x}C$. Note that $C/\underline{x}C$ is faithful over $R/\underline{x}R$; see \ref{semibasic}(i, ii). This forces $M/\underline{x}M$ is faithful over $R/\underline{x}R$, which is a contradiction due to the hypothesis of the theorem. 

Set $t=\depth(R)$, $v=\depth_R(M\otimes_RN)$, and $X=\Hom_R(C, M\otimes_R N)$. Suppose $\pd_R(X)<\infty$ and seek a contradiction. By the argument above, it suffices to obtain a surjective $R$-module homomorphism $M^{\oplus n} \rightarrow C$ for some $n\geq 1$. Set $\pd_R(X)=r$. If $r=0$, that is, $X$ is free, we obtain such a surjection from Corollary \ref{semi1}. Hence, we may assume $r\geq 1$. As $X\neq 0$, the Auslander-Buchsbaum formula shows that $r=t-\depth_R(X)$.

As $\pd_R(X)<\infty$, we have that $\Ext_R^i(C,M\otimes_R N)=0$ for all $i\ge 1$; see \ref{semifact}(ii). Therefore, $\depth_R(X)=v$ so that $r=t-v$; see \ref{AY}(ii). We assume $\Ext_R^{i}(M,C)=0$ for all $i=1,\dots, r$, and hence Corollary \ref{semi1} yields a surjection $M^{\oplus u} \rightarrow C$, where $u=\mu_R(N)\geq 1$. This proves that $\pd_R(X)$ cannot be finite.

(ii) Assume $M$ is maximal Cohen-Macaulay, $\id_R(M\otimes_RN)<\infty$, and seek a contradiction. Note that $R$ is Cohen-Macaulay; see \cite[9.6.4]{BH}.
We may assume $R$ is complete with canonical module $\omega_R$; see \ref{AY}(i). Moreover, in view of \ref{Sharp}, we have that $\pd_R\big(\Hom_R(\omega, M\otimes_RN)\big)<\infty$. Furthermore, since $M$ is maximal Cohen-Macaulay, we have that $\Ext^i_R(M,\omega_R)=0$ for all $i\geq 1$. Since $\omega_R$ is semidualizing, we get a contradiction due to part (i) of the theorem. 
\end{proof}

\begin{cor}\label{1.6prf} Let $R$ be a local ring such that $\depth(R)=t$, $C$ be a semidualizing $R$-module, and let $M$ and $N$ be $R$-modules. Assume  
$M=\Omega_R L$ for some nonfree $R$-module $L$ with $\depth_R(L) \geq t$. If $t\geq 1$, assume further $\Ext_R^i(M,C)=0$ for all $i=1,\dots,t$. Then the following conditions are equivalent:
\begin{enumerate}[\rm(i)]
\item $N=0$
\item $\pd_R(M\otimes_R N)<\infty$.
\item $\pd_R\big(\Hom_R(C,M\otimes_R N)\big)<\infty$. 
\end{enumerate}
\end{cor}

\begin{proof} We assume part (iii) holds and proceed to show that part (i) holds. As $\depth_R(L) \ge t$, there is a maximal $R$-regular sequence $\{\underline{x}\}\subseteq \fm$ such that $\underline{x}$ is $L$-regular. Consider the minimal syzygy exact sequence $0\to M \to R^{\oplus \mu_R(L)}\to L \to 0$. Tensoring this sequence with $R/\underline{x}R$, we obtain the exact sequence $0\to M/\underline{x}M \to (R/\underline{x}R)^{\oplus \mu_R(L)}\to L/\underline{x}L\to 0$. Note that $\mu_R(L)=\mu_{R/\underline{x}R}(L/\underline{x}L)$. Therefore, $M/\underline{x}M \cong \Omega_{R/\underline{x}R}(L/\underline{x}L)$. This implies that $M/\underline{x}M\subseteq \fm (R/\underline{x}R)^{\oplus \mu(L)}$.

We have that $\depth(R/\underline{x} R)=0$. Hence, $\Soc(R/\underline{x}R)\neq 0$ and it annihilates $\fm(R/\underline{x} R)$. Consequently, $M/\underline{x} M$ is not faithful over $R/\underline{x} R$ because $M/\underline{x}M\subseteq \fm (R/\underline{x}R)^{\oplus \mu(L)}$. Since $\pd_R\big(\Hom_R(C,M\otimes_R N)\big)<\infty$ and $\Ext_R^i(M,C)=0$ for all $i=1,\dots,t$, Theorem \ref{semimain}(i) implies that $M=0$ or $N=0$.
As we assume $L$ is not free, it follows that $M\neq 0$. Thus $N=0$, and part (i) holds.

Next assume part (ii) holds. Set $T=M\otimes_R N$. Then $\Hom_R(C, C\otimes_RT)\cong T$; see \ref{semifact}(i).  Therefore, $T\cong \Hom_R(C, C\otimes_RT)=\Hom_R\big(C, M\otimes_R(C\otimes_R N)\big)$ has finite projective dimension. Now we make use of the fact that part (iii) implies part (i), with the modules $M$ and $C\otimes_RN$, and deduce that $C\otimes_R N=0$. As $C\neq 0$, we see that  $N=0$. So part (i) holds.
\end{proof}

Theorem \ref{main2} is subsumed by the following consequence of Corollary \ref{1.6prf}.

\begin{cor} \label{corend} Let $R$ be a Cohen-Macaulay local ring and let $M$ and $N$ be $R$-modules such that
$M=\Omega_R L$ for some nonfree maximal Cohen-Macaulay $R$-module $L$. Then the following conditions are equivalent:
\begin{enumerate}[\rm(i)]
\item $N=0$
\item $\pd_R(M\otimes_R N)<\infty$.
\item $\id_R(M\otimes_R N)<\infty$.  
\end{enumerate}
\end{cor}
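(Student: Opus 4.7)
The plan is to establish (ii)$\Rightarrow$(i) and (iii)$\Rightarrow$(i); the converses are trivial since $N=0$ makes $M\otimes_R N=0$.

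I would begin with (iii)$\Rightarrow$(i), which is a direct appeal to Theorem \ref{injdimthm}. First, $M$ is itself maximal Cohen--Macaulay by the depth lemma applied to $0\to M\to F\to L\to 0$ coming from a free cover of $L$, so the MCM hypothesis of Theorem \ref{injdimthm} is met. The real task is to exhibit an $R$-regular sequence $\underline{x}\subseteq\fm$ for which $M/\underline{x}M$ is not faithful over $\bar R:=R/\underline{x}R$. I would take $\underline{x}$ to be a system of parameters, which is regular since $R$ is Cohen--Macaulay; then $\bar R$ is Artinian with $\Soc(\bar R)\neq 0$. After absorbing any free summands of $L$ (which leaves $\Omega_R L$ unchanged), I may assume the cover $F\twoheadrightarrow L$ is minimal, so $M\subseteq\fm F$. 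Because $L$ is MCM, $\underline{x}$ is $L$-regular, and hence $\Tor_1^R(L,\bar R)=0$; reducing $0\to M\to F\to L\to 0$ modulo $\underline{x}$ then produces an inclusion $M/\underline{x}M\hookrightarrow F/\underline{x}F$ whose image lies inside $\bar\fm\cdot(F/\underline{x}F)$. Since $\Soc(\bar R)$ annihilates $\bar\fm$, it annihilates $M/\underline{x}M$, so this module is not faithful over $\bar R$. Theorem \ref{injdimthm} then gives $\id_R(M\otimes_R N)=\infty$ whenever $N\neq 0$.

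To derive (ii)$\Rightarrow$(i), I would first replace $R$ by its $\fm$-adic completion so that a canonical module $\omega$ exists; this reduction is legitimate because Cohen--Macaulayness, the presentation $M=\Omega_R L$ with $L$ MCM nonfree, finiteness of $\pd$, and the conclusion $N=0$ are all preserved by faithfully flat base change. Assuming $\pd_R(M\otimes_R N)<\infty$, fact \ref{switch} gives $\id_R\bigl(M\otimes_R(N\otimes_R\omega)\bigr)<\infty$, and applying the already-established (iii)$\Rightarrow$(i) to the pair $(M,\,N\otimes_R\omega)$ forces $N\otimes_R\omega=0$. Since $\omega\neq 0$, Nakayama's lemma applied to $(N/\fm N)\otimes_k(\omega/\fm\omega)$ forces $N=0$.

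The step I expect to carry the most weight is verifying the containment $M/\underline{x}M\subseteq\bar\fm\cdot(F/\underline{x}F)$. It rests on three ingredients used in succession: reducing to the case $L$ has no free summand, choosing a minimal cover so that $M\subseteq\fm F$, and invoking the Tor-vanishing $\Tor_1^R(L,\bar R)=0$ from MCM-ness of $L$ so that the containment in $\fm F$ survives reduction by $\underline{x}$. Once these are in place, the socle of $\bar R$ does the rest, and the $\omega$-twist of fact \ref{switch} bridges the projective-dimension statement to the injective-dimension statement.
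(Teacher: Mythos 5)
Your proposal is correct and follows essentially the same route as the paper: for (iii)$\Rightarrow$(i) you reduce $M$ modulo a maximal regular sequence (system of parameters), observe that it lands inside $\fm$ times a free module so the socle of the Artinian quotient kills it, and invoke Theorem \ref{injdimthm}; for (ii)$\Rightarrow$(i) you complete, apply \ref{switch} to twist by $\omega$, and feed the result into the injective-dimension case. Your write-up makes a few steps more explicit than the paper's (the depth-lemma check that $M$ is MCM, the $\Tor_1^R(L,\bar R)=0$ justification for why the containment $M\subseteq\fm F$ survives reduction, and the Nakayama argument deducing $N=0$ from $N\otimes_R\omega=0$), but the underlying argument is the same.
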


\begin{proof} We may assume $R$ is complete with canonical module $\omega_R$. Note that $\id_R(M\otimes_R N)<\infty$ if and only if $\pd_R\big(\Hom_R(\omega_R,M\otimes_R N)\big)<\infty$; see \ref{Sharp}. Also, as $M$ is a maximal Cohen-Macaulay $R$-module, it follows that $\Ext_R^i(M,\omega_R)=0$ for all $i\ge 1$.  Hence, since $\omega_R$ is semidualizing, the claims hold due to  Corollary \ref{1.6prf}. 
\end{proof}

%

%

\subsection*{Some further corollaries} We finish this section by giving two corollaries that demonstrate how our results can be used. The first corollary we establish, namely Corollary \ref{c30}, yields an affirmative answer to Question \ref{soru1} when one of the modules in question is a finite direct sum of high syzygies of the residue field. First we recall:
 
 \begin{chunk} \label{DT} Let $R$ be a Cohen-Macaulay local ring with canonical module $\omega_R$. If a finite direct sum of copies of syzygy modules of the residue field of $R$ maps surjectively onto $\omega_R$, then $R$ is regular; see \cite[3.6]{GGP}. 
\end{chunk}

\begin{cor} \label{c30} Let $R$ be a $d$-dimensional Cohen-Macaulay local ring and let $N$ be an $R$-module. Set $M=\bigoplus\limits_{i=d}^{d+n}\big(\Omega^{i}_R k \big)^{\oplus a_i}$, for some integers $n\geq 0$ and $a_i\geq 1$. If $\pd_R(M\otimes_RN)<\infty$ or  $\id_R(M\otimes_RN)<\infty$, then $R$ is regular so that $\pd_R(M)<\infty$ and $\pd_R(N)<\infty$.
\end{cor}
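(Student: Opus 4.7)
The plan is to isolate a single summand of $M$ to which the established results apply. Fix any $j \in \{d, \dots, d+n\}$ with $a_j \geq 1$, so that $\Omega^j_R k$ is a direct summand of $M$, and hence $\Omega^j_R k \otimes_R N$ is a direct summand of $M\otimes_R N$ and inherits finite projective or injective dimension. Because $R$ is Cohen--Macaulay of dimension $d$ and $j\geq d$, repeated use of the depth lemma gives $\depth \Omega^j_R k \geq d$, so $\Omega^j_R k$ is maximal Cohen--Macaulay. Completion is flat and preserves all hypotheses and conclusions in sight, so I would pass to $\hat R$ and assume $R$ admits a canonical module $\omega$.

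For the case $\id_R(M\otimes_R N)<\infty$, the argument is immediate: Corollary \ref{injprop} applied to the MCM module $\Omega^j_R k$ and the nonzero module $N$ yields a surjective $R$-module homomorphism $(\Omega^j_R k)^{\oplus \mu(N)} \twoheadrightarrow \omega$, and then fact \ref{DT} forces $R$ to be regular. For the case $\pd_R(M\otimes_R N)<\infty$, the plan is to reduce to the injective dimension case by tensoring with $\omega$: by \ref{switch}, finite projective dimension of $\Omega^j_R k\otimes_R N$ implies finite injective dimension of
\[
(\Omega^j_R k \otimes_R N) \otimes_R \omega \;\cong\; \Omega^j_R k \otimes_R (N\otimes_R \omega).
\]
Since $N\neq 0$ and $\omega\neq 0$, Nakayama's lemma applied to $(N\otimes_R\omega)/\fm(N\otimes_R\omega)\cong N/\fm N\otimes_k \omega/\fm\omega$ shows $N\otimes_R\omega\neq 0$. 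Thus Corollary \ref{injprop}, applied with $N$ replaced by $N\otimes_R\omega$, again produces a surjection from a direct sum of copies of $\Omega^j_R k$ onto $\omega$, and \ref{DT} yields that $R$ is regular.

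Once $R$ is known to be regular, every finitely generated $R$-module has finite projective dimension, so in particular $\pd_R(M)<\infty$ and $\pd_R(N)<\infty$, which finishes the proof. The only non-routine step is the reduction of the projective dimension case to the injective dimension case; the rest is a direct assembly of \ref{switch}, Corollary \ref{injprop}, and fact \ref{DT}, taking advantage of the fact that the hypotheses on $M$ guarantee the presence of an MCM syzygy summand of the residue field $k$ to which \ref{DT} is tailor-made.
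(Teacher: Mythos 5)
Your proof is correct and follows essentially the same route as the paper: pass to the completion to get a canonical module $\omega$, use \ref{switch} to reduce the projective-dimension case to the injective-dimension case via $N\otimes_R\omega$, apply Corollary \ref{injprop} to a maximal Cohen--Macaulay syzygy of $k$ to get a surjection onto $\omega$, and invoke \ref{DT}. The only cosmetic difference is that you isolate a single summand $\Omega^j_R k$ while the paper applies \ref{injprop} to all of $M$ at once (which is also MCM); your explicit Nakayama check that $N\otimes_R\omega\neq 0$ is a nice detail the paper leaves tacit.
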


\begin{proof} We may assume $R$ is complete with canonical module $\omega_R$. Note that $M$ is maximal Cohen-Macaulay. If $\id_R(M\otimes_RN)<\infty$, then the claim follows from Corollary \ref{injprop} and \ref{DT}. 

Assume $\pd_R(M\otimes_RN)<\infty$. Then $\id_R(M\otimes_RT)<\infty$, where $T=N\otimes_R\omega$; see \ref{Sharp}. In that case we use Corollary \ref{injprop} with the modules $M$ and $T$, and deduce from \ref{DT} that $R$ is regular.
\end{proof}

A maximal Cohen-Macaulay module $M$ over a Cohen-Macaulay local ring $R$ is said to be \emph{Ulrich} with respect to an $\fm$-primary ideal $I$ of $R$ provided that $M/IM$ is free over $R/I$ and the multiplicity $\e(I,M)$ of $M$ with respect to $I$ equals the length of $M/IM$; see, for example, \cite{Ulrich87}. 
An Ulrich module with respect to $\fm$ is simply called Ulrich. Ulrich modules are currently of significant research interest; they have been studied extensively, and examples of such modules are abundant in the literature. For example, if $R$ is a one-dimensional local domain, then $\fm^{i}$ is Ulrich with respect to $\fm$ for all $i\gg 0$; see \cite{GotoUlrich, Ulrich2} for details and further examples.

\begin{cor} \label{c37} Let $R$ be a Cohen-Macaulay local ring, $M$ be an Ulrich $R$-module with respect to an $\fm$-primary ideal $I$ of $R$, and let $N$ be nonzero $R$-module. If $\pd_R(M\otimes_RN)<\infty$, then $M$ is free, $\pd_R(N)<\infty$, and $\pd_R(I)<\infty$. Therefore, if $M$ is Ulrich, that is, if $I=\fm$, then $R$ is regular.
\end{cor}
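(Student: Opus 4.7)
The plan is to exploit the injective-dimension counterpart (Corollary~\ref{injprop}) to exhibit $\omega$ as a quotient of a direct sum of copies of $M$; the Ulrich hypothesis will then force $I$ to be a parameter ideal, after which freeness of $M$ will lift from the Artinian quotient $R/I$, and the remaining conclusions will follow formally.

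Passing to the $\fm$-adic completion, which preserves the Ulrich property, maximal Cohen--Macaulayness, and the finiteness of $\pd_R(M \otimes_R N)$, we may assume $R$ is complete with canonical module $\omega$. Set $N' = N \otimes_R \omega$; since $\omega$ has full support and $N \neq 0$, we have $N' \neq 0$, and \ref{switch} yields $\id_R(M \otimes_R N') < \infty$. Corollary~\ref{injprop} then delivers a surjection $M^{\oplus r} \twoheadrightarrow \omega$ with $r = \mu(N') \ge 1$.

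After further extending scalars to $R[X]_{\fm R[X]}$ if needed to secure an infinite residue field, pick a minimal reduction $J = (x_1, \ldots, x_d)$ of $I$; the sequence $x_1,\ldots,x_d$ is $R$-regular, and also $M$-regular since $M$ is maximal Cohen--Macaulay. Combining $\e(J,M) = \e(I,M) = \ell(M/IM)$ with $\e(J,M) = \ell(M/JM)$ gives $JM = IM$, so $M/JM = M/IM$ is free over $R/I$. Tensoring the surjection above with $R/J$ produces a surjection $(M/JM)^{\oplus r} \twoheadrightarrow \omega/J\omega$ whose source is annihilated by $I$. On the other hand, $\omega/J\omega$ is the canonical module of the Artinian local ring $R/J$, hence faithful over $R/J$; thus $I/J$ annihilates a faithful $R/J$-module, forcing $I \subseteq J$ and therefore $I = J$.

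Now $I = J$ is a parameter ideal, so $\pd_R(I) < \infty$. Moreover $M/JM$ is free over $R/J$ and $\{x_i\}$ is both $R$- and $M$-regular; by the standard lifting lemma (bases lift by Nakayama, and the vanishing $\Tor_1^R(R/J, M) = 0$ ensures no new relations appear), $M \cong R^{\oplus s}$ with $s = \mu(M)$. Then $\pd_R(M \otimes_R N) = \pd_R(N^{\oplus s}) = \pd_R(N) < \infty$. Finally, if $I = \fm$, the equality $I = J$ says $\fm$ is itself a parameter ideal, that is, $R$ is regular. The main technical hurdle is precisely the passage from the surjection $M^{\oplus r} \twoheadrightarrow \omega$ to $I = J$: the Ulrich condition repackages $(M/JM)^{\oplus r}$ as an $R/I$-module after reducing mod $J$, whereupon the faithfulness of the canonical module of $R/J$ pins $I$ down to $J$.
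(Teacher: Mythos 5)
Your proof is correct and follows essentially the same route as the paper's: reduce to the complete case with infinite residue field, pick a minimal reduction $J$ of $I$ so that $IM=JM$, use \ref{switch} and Corollary~\ref{injprop} to produce a surjection $M^{\oplus r}\twoheadrightarrow\omega$, reduce modulo $J$ to exploit faithfulness of the canonical module of $R/J$ and conclude $I=J$, and finally lift freeness from $M/JM$ to $M$. The only difference is expositional: you inline the reduction-mod-$J$ step that the paper delegates to Theorem~\ref{injdimthm} and spell out the multiplicity comparison $\e(J,M)=\ell(M/JM)$ and the Nakayama-plus-$\Tor_1$ freeness lift, which the paper cites or leaves implicit.
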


\begin{proof} We can consider the faithfully flat extension $R \to \widehat{R[x]_{\fm[x]}}$, and hence assume $R$ is complete with canonical $\omega_R$ and infinite residue field; see, for example \cite[page 48]{Ei}. Then there exists a parameter ideal $\underline{x}=(x_1,\ldots, x_d)\subseteq I$ such that $IM=\underline{x}M$; see \cite[3.1]{GotoUlrich}. Thus the ideal $I/\underline{x}R$ of $R/\underline{x}R$ annihilates $M/\underline{x}M$. 

As $\pd_R(M\otimes_RN)<\infty$, it follows from \ref{Sharp} that $\id_R(M\otimes_RT)<\infty$, where $T=N\otimes_R \omega$. As $\underline{x}$ is a regular sequence on $R$, Theorem \ref{semimain}(ii) implies that $M/\underline{x}M$ must be faithful over $R/\underline{x}R$. Since $I/\underline{x}R\subseteq \Ann_{R/\underline{x}R}(M/\underline{x}M)$, we conclude that $I=\underline{x}R$. This implies that $\pd_R(I)<\infty$. 

We know $M/IM=M/\underline{x}M$ is free over $R/\underline{x}R$. As $\underline{x}$ is regular on both $R$ and $M$, we have that $\pd_R(M)=\pd_{R/\underline{x}R}(M/\underline{x}M)=0$, that is, $M$ is free.
\end{proof}


\appendix
\section{Proof of Theorem \ref{Roger} and some miscellaneous observations} \label{A}

This section contains a proof of Theorem \ref{Roger} as well as some observations yielding affirmative answers to Questions \ref{q1} and \ref{soru1} in some special cases. We should note that one can find various conditions in the literature under which special cases of these questions are true. Here, in this appendix, we point out only a few such results which motivate us and which are interesting for us.
\subsection*{A proof of Theorem \ref{Roger}}

In this subsection we give a proof of Theorem \ref{Roger} which is due to Roger Wiegand \cite {RW}. The proof relies upon the facts \ref{Free}, \ref{F1}, and \ref{obsR} stated next. 

\begin{chunk} \label{Free} Let $R$ be a commutative ring, and let $M$ and $N$ be $R$-modules. If $M\otimes_RN$ is nonzero and free, then $M$ and $N$ are both projective; see, for example \cite[3.4.7]{OldBook}. \qed
\end{chunk}

\begin{chunk} \label{F1} Let $R$ be a local ring and let $I$ be a proper ideal of $R$ such that $I$ contains a non zero-divisor on $R$. Then $I$ can be (minimally) generated by non zero-divisors on $R$. Here we justify this fact by giving a brief argument (taken from \cite{RW}).

We proceed by induction on the minimal number $v$ of generators required for $I$. Let $\{\fp_1, \ldots, \fp_t\}$ be the set of all associated primes of $R$. Choose $x_1 \in I-\big((\fm I)\cup (\fp_1 \cup \cdots \cup \fp_t)\big)$. Then $x_1$ is a non zero-divisor on $R$. If $v=1$, then $I=Rx_1$, and hence we are done. So we assume $v \geq 2$ and choose a minimal generating set for $I$, say $x_1, \ldots, x_v$. Let $J$ be the ideal of $R$ generated by $x_1, \ldots, x_{v-1}$. Then, since $J$ contains the non zero-divisor $x_1$, it follows by the induction hypothesis that $J$ is minimally generated by some elements $y_1, \ldots, y_{v-1}$, where each $y_i$ is a non zero-divisor on $R$. So $I$ is minimally generated by $y_1, \ldots, y_v$, where $y_v$ is an element in $I-\big((J+\fm I)\cup (\fp_1 \cup \cdots \cup \fp_t)\big)$. \qed
\end{chunk}

The fact recorded in \ref{F1} implies:

\begin{chunk} \label{obsR} Let $R$ be a local ring of positive depth. Then there is a sequence $\underline{x}=\{x_1, \ldots, x_n\} \subseteq \fm$ such that each $x_i$ is a non zero-divisor on $R$ and $k \cong \otimes_{i=1}^n(R/x_iR)$ for some positive integer $n$. Therefore $R$ is regular if the following condition holds: $\pd_R(R/\underline{x}R)<\infty$ whenever $\underline{x}$ is a sequence of elements in $\fm$, each of which is a non zero-divisor on $R$. \qed
\end{chunk}

We are now ready to give a proof of Theorem \ref{Roger}:

\begin{proof}[A proof of Theorem \ref{Roger}] (\cite{RW}) Note that, in view of \ref{Free}, it is enough to prove (i) $\Longrightarrow$ (ii) $\Longrightarrow$ (iii). The implication (i) $\Longrightarrow$ (ii) is due to the fact that, if $M$ and $N$ are $R$-modules, then $M\otimes_RN$ is a direct summand of $(M\oplus N)\otimes_R(M\oplus N)$. As \ref{obsR} establishes the implication (ii) $\Longrightarrow$ (iiii), the conclusions of the theorem hold.
\end{proof}

\begin{chunk} The argument used for the proof of Theorem \ref{Roger} also characterizes Gorenstein rings (respectively, complete intersection rings) by using the Gorenstein dimension \cite{AuBr} (respectively, the complete intersection dimension \cite{AGP}) instead of the projective dimension. For example a local ring $R$ of positive depth must be Gorenstein if $\gdim_R(M\otimes_RM)<\infty$ for each $R$-module $M$ with $\gdim_R(M)<\infty$.
\end{chunk}

\subsection*{An affirmative answer for Question \ref{q1}}

In this subsection we establish an observation advertised in the introduction, and obtain an affirmative answer for Question~\ref{q1} in a special case; see Proposition~\ref{cor3}. 

An $R$-module $N$ is said to satisfy $(\widetilde{S}_n)$ for some $n\geq 0$ if $\depth_{R_{\fp}}(N_{\fp})\geq \min\{n, \depth(R_{\fp})\}$ for all $\fp \in \Supp_R(N)$ (recall that $\depth_R(0)=\infty$ by convention). Note that, over Cohen-Macaulay rings, the condition $(\widetilde{S}_n)$ is nothing but a condition $(S_n)$ of Serre; see, for example \cite[page 3]{EG}. 

\begin{chunk} \label{cor1} Let $R$ be a local ring and let $M$ and $N$ be nonzero $R$-modules such that $\pd_R(M)<\infty$. Assume at least one of the following conditions holds:
\begin{enumerate}[\rm(i)]
\item $N$ satisfies $(\widetilde{S}_h)$, where $h=\pd_R(M)$.
\item $\G-dim_R(N)<\infty$ and $M$ satisfies $(\widetilde{S}_h)$, where $h=\G-dim_R(N)$.
\end{enumerate}
Then $\Tor_{i}^R(M, N) = 0$ for all $i\geq 1$. Therefore $\pd_R(M\otimes_RN)<\infty$ if and only if $\pd_R(N)<\infty$. 
\end{chunk}

\begin{proof} As $\pd_R(M)<\infty$, the following equality of Jorgensen \cite[2.2]{JAB} holds:
\begin{align}\notag{}
& \sup\{n\geq 0\mid \Tor_n^R(M,N)\not=0\}=  \\   & \notag{} \sup\{\depth(R_\p)-\depth_{R_{\fp}}(M_\fp)-\depth_{R_{\fp}}(N_\fp)\mid \p\in\Supp_R(M\otimes_RN)\}. 
\end{align}
Let $\fp\in \Supp_R(M\otimes_RN)$ and proceed to show that $\depth_{R_{\fp}}(M_\p)+\depth_{R_{\fp}}(N_\p)  \geq \depth(R_\fp)$; note that establishing this inequality is sufficient to conclude the vanishing of all $\Tor_{}^R(M, N)$ modules due to the equality of Jorgensen.

First assume the condition in part (i) holds. The claim follows if $h\geq \depth(R_\fp)$ since $N$ satisfies $(\widetilde{S}_h)$ so in that case $\depth_{R_{\p}} N_{\p} \ge \depth(R_\fp)$. If, on the other hand, $\depth(R_\fp)\geq h$, then $\depth_{R_{\p}} N_{\p}\ge h$, and the claim follows due to the following (in)equalities:
$$\depth_{R_{\fp}}(M_\p)+\depth_{R_{\fp}}(N_\p) \geq \depth_{R_{\fp}}(M_\p)+ \pd_R(M) \geq \depth_{R_{\fp}}(M_\p) + \pd_{R_{\fp}}(M_\p) = \depth(R_\fp).$$

Next assume the conditions in part (ii) hold.   Then it follows that $\gdim_{R_{\fp}}(N_\fp) \leq h$ and hence $\depth_{R_{\fp}}(N_\fp)=\depth(R_\fp)-\gdim_{R_{\fp}}(N_\fp)\ge \depth(R_\fp)-h$. So, in view of this inequality, since $M$ satisfies $(\widetilde{S}_h)$, we deduce that $\depth_{R_{\fp}}(M_\p)+\depth_{R_{\fp}}(N_\p)  \geq \depth(R_\fp)$. This proves the claim.
\end{proof}

\begin{chunk} We note that, if the condition in part (i) of \ref{cor1} holds, an alternative way to prove the vanishing of $\Tor_{i}^R(M, N)$ for all $i\geq 1$ is to utilize the following interesting results:
\begin{enumerate}[\rm(a)]
\item Let $R$ be a local ring and let $M$ be an $R$-module such that $\pd_R(M)=h<\infty$. Then there exists an $R$-regular sequence $\underline{x}=\{x_1, \ldots, x_h\} \subseteq \fm$ with the following property: $\Tor_{i}^R(M, N) = 0$ for all $i\geq 1$ whenever $N$ is an $R$-module such that $\underline{x}$ is a regular sequence on $N$; see \cite[2.5]{Dutta}.
\item If $R$ is a local ring and $N$ is an $R$-module satisfying $(\widetilde{S}_h)$ for some $h\geq 0$, then each $R$-regular sequence of length at most $h$ is also an $N$-regular sequence; see \cite[2.1]{aq}. 
\end{enumerate}
\end{chunk}

Our observation in \ref{cor1} can be compared with Example \ref{exRoger}(ii): there are $R$-modules $M$ and $N$ such that $\depth(R)-\depth_R(M)=\depth(R)-\depth_R(N)=1=\pd_R(M)=\pd_R(N)<\infty=\pd_R(M\otimes_RN)$, and $\G-dim_R(N)<\infty$, but neither $M$ nor $N$ satisfies $(\widetilde{S}_1)$.

Next we use \ref{cor1} and obtain an affirmative answer to Question \ref{q1} for a special case. 

\begin{prop} \label{cor3} Let $R$ be a local ring and let $M$ be an $R$-module which is locally free on the punctured spectrum of $R$. If $\pd_R(M) \leq \depth(R)/2$, then $\pd_R(M\otimes_RM)<\infty$. \qed
\end{prop}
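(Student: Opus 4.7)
The plan is to reduce the claim directly to Fact \ref{cor1}(i) by showing that, under the stated hypotheses, $M$ itself satisfies the Serre-type condition $(\widetilde{S}_h)$ with $h=\pd_R(M)$. Once this is in hand, \ref{cor1}(i) applied with $N=M$ gives $\Tor_i^R(M,M)=0$ for all $i\geq 1$, and its concluding equivalence reduces the finiteness of $\pd_R(M\otimes_R M)$ to the finiteness of $\pd_R(M)$, which is part of the hypothesis.

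To verify $(\widetilde{S}_h)$ for $M$, I would check the inequality $\depth_{R_\fp}(M_\fp)\geq \min\{h,\depth(R_\fp)\}$ at each $\fp\in \Supp_R(M)$. For $\fp\neq \fm$, the local freeness on the punctured spectrum implies $M_\fp$ is a nonzero free $R_\fp$-module, so $\depth_{R_\fp}(M_\fp)=\depth(R_\fp)$, and the required inequality is immediate. For $\fp=\fm$, I would invoke the Auslander--Buchsbaum formula to get $\depth_R(M)=\depth(R)-h$, and then use the numerical hypothesis $h\leq \depth(R)/2$ to conclude
\[
\depth_R(M)=\depth(R)-h\geq h=\min\{h,\depth(R)\},
\]
where the last equality holds because $h\leq \depth(R)/2\leq \depth(R)$.

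There is essentially no hard step here; the whole argument is an alignment of hypotheses with \ref{cor1}(i). The only point that requires a moment's care is the degenerate case $\depth(R)=0$, in which $h=0$ forces $M$ to be free and the conclusion is trivial. Once the two displayed inequalities above are checked, the proof is complete by citing \ref{cor1}(i).
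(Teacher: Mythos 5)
Your proof is correct and follows essentially the same route as the paper's: verify that $M$ satisfies $(\widetilde{S}_h)$ with $h=\pd_R(M)$ by splitting into the cases $\fp\neq\fm$ (where local freeness gives $\depth_{R_\fp}(M_\fp)=\depth(R_\fp)$) and $\fp=\fm$ (where Auslander--Buchsbaum together with $h\leq\depth(R)/2$ gives $\depth_R(M)\geq h=\min\{h,\depth(R)\}$), then cite \ref{cor1}(i). The paper's proof is a terser version of exactly this argument.
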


\begin{proof} Set $d=\depth(R)$ and $h=\pd_R(M)$. Then $\depth_R(M)\geq d/2\geq h=\min\{d, h\}$. Hence, since $\depth_{R_{\fp}}(M_{\fp})=\depth(R_{\fp})$ for all $\fp \in \Supp_R(M)-\{\fm\}$, it follows that $M$ satisfies $(\widetilde{S}_h)$. Therefore, the claim follows from \ref{cor1}.
\end{proof}

\begin{eg} [{\cite[3.5]{CeD2}}] \label{exCD} Let $R=k[\![x,y,z]\!]/(xy-z^2)$ and let $I$ be the ideal of $R$ generated by $x$ and $y$. Then $R$ is a two-dimensional Cohen-Macaulay ring and $I$ is locally free on the punctured spectrum of $R$ such that $\pd_R(I) =1$. 
Hence one can use, for example, \ref{cor3} and conclude that $\pd_R(I\otimes_R I)<\infty$.
\end{eg}


\subsection*{Some affirmative answers for Question \ref{soru1}} In this subsection we record some observations giving affirmative answers to Question \ref{soru1}; see, for example, Proposition \ref{c32}. 

The first and the second parts of the next result are essentially due to Celikbas-Takahashi \cite{CTGlas} and Gheibi \cite{Mohsen}, respectively.

\begin{chunk} \label{c31} Let $R$ be a local ring and let $M$ and $N$ be nonzero $R$-modules. Assume $\pd_R(M \otimes_R N)<\infty$. 
\begin{enumerate}[\rm(i)]
\item If $M=\fm X$ for some nonzero $R$-module $X$, then $R$ is regular, and $\pd_R(M)<\infty$ and $\pd_R(N)<\infty$.
\item If $\id_R(M)<\infty$, then $R$ is Gorenstein and $\pd_R(M)<\infty$.
\end{enumerate}
 \end{chunk} 
  
\begin{proof}
The conclusion in part (i) follows from two facts: $\fm X \otimes_R N \cong \fm C$ for some nonzero $R$-module $C$ \cite[2.8]{CTGlas}. Moreover, if $\pd_R(\fm C)<\infty$; then $R$ is regular; see \cite[1.1]{LV}. 

For part (ii), note that we have a surjection $M^{\oplus r} \to M\otimes_R N$ for some $r\geq 1$. Hence, if $\id_R(M)<\infty$, \cite[4.1]{Mohsen} shows that $R$ is Gorenstein so that $\pd_R(M)<\infty$ since $\id_R(M)<\infty$; see \cite[2.2]{LV}.
\end{proof}

A special case of \ref{c31}(ii) is:

\begin{chunk} Let $R$ be a Cohen-Macaulay local ring with a canonical module $\omega$ and let $N$ be a nonzero $R$-module such that $\pd_R(\omega\otimes_RN)<\infty$. Then $R$ is Gorenstein and hence $\pd_R(N)<\infty$. 
\end{chunk}

The next observation is straightforward due to \ref{cor1} and \ref{c31}:

\begin{chunk} Let $R$ be a local ring and let $M$ and $N$ be nonzero $R$-modules such that $\pd_R(M \otimes_R N)<\infty$ and $\id_R(M)<\infty$. Then $\pd_R(M)<\infty$ and $\pd_R(N)<\infty$ if at least one of the following holds:
\begin{enumerate}[\rm(i)]
\item $N$ satisfies $(\widetilde{S}_h)$, where $h=\depth(R)-\depth_R(M)$.
\item $M$ satisfies $(\widetilde{S}_h)$, where $h=\depth(R)-\depth_R(N)$.
\end{enumerate}
 \end{chunk} 

The next result is essentially contained in \cite[4.7 and 4.8]{Bounds}; here it is reformulated in terms of the projective dimension. We add a brief argument for completeness.

\begin{prop} \label{c32} Let $R=\CC[\![x_0, \ldots, x_d]\!]/(f)$ be a simple singularity, where $0 \neq f \in (x_0, \ldots, x_d)^2$ and $d$ is a positive even integer. 
Let $M$ and $N$ be $R$-modules such that $\pd_R(M\otimes_R N)<\depth(R)$. If $M$ and $N$ are both locally free on the punctured spectrum of $R$, then $\pd_R(M)<\depth(R)$ and $\pd_R(N)<\depth(R)$. Therefore, if $M$ and $N$ are maximal Cohen-Macaulay, then $M$ and $N$ are free.
\end{prop}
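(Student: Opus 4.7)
The plan is to reduce to the case where $M$ and $N$ are maximal Cohen--Macaulay and then invoke the Tor-vanishing theorem for simple singularities of even positive dimension that underlies [Bounds, 4.7 and 4.8].

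Setting $d=\depth(R)=\dim(R)$, so that $R$ is a $d$-dimensional hypersurface with isolated singularity, the conclusion $\pd_R(M)<d$ is equivalent (by Auslander--Buchsbaum) to the vanishing of the minimal $d$-th syzygy $M':=\Omega^d_R M$, and similarly for $N$. Whenever nonzero, $M'$ and $N'$ are MCM and locally free on the punctured spectrum. By iterating the syzygy short exact sequences $0\to \Omega^i L\to F_i\to \Omega^{i-1}L\to 0$ for $L\in\{M,N\}$ and tracking the long exact sequences of Tor, one can transport the finiteness of $\pd_R(M\otimes_R N)$ into finiteness of $\pd_R(M'\otimes_R N')$. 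It therefore suffices to prove: if $M$ and $N$ are MCM modules locally free on the punctured spectrum with $\pd_R(M\otimes_R N)<\infty$, then both $M$ and $N$ are free.

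In this MCM setting, each $\Tor_i^R(M,N)$ for $i\ge 1$ is supported only at $\fm$ and hence has finite length. Because $R$ is a hypersurface, these Tor modules are eventually $2$-periodic, so the Herbrand invariant $\eta^R(M,N):=\ell(\Tor_{2i}^R(M,N))-\ell(\Tor_{2i+1}^R(M,N))$ (for $i\gg 0$) is a well-defined integer. The finiteness of $\pd_R(M\otimes_R N)$, combined with the Huneke--Wiegand Tor-rigidity theorem for hypersurfaces, forces the $\Tor_i^R(M,N)$ to vanish for all $i\gg 0$, so that $\eta^R(M,N)=0$. The decisive feature of simple singularities of even positive dimension is that the Herbrand pairing is non-degenerate on the stable category of MCM modules locally free on the punctured spectrum; this is essentially the content of [Bounds, 4.7 and 4.8]. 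Thus $\eta^R(M,N)=0$ forces one of $M$ or $N$ (say $M$) to be free, whence $M\otimes_R N\cong N^{\oplus\rank M}$ gives $\pd_R(N)<\infty$, and the MCM hypothesis plus Auslander--Buchsbaum makes $N$ free as well. The ``therefore'' clause of the proposition is then immediate from the same Auslander--Buchsbaum observation.

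The main obstacle I expect is the non-degeneracy of the Herbrand invariant for simple singularities of even positive dimension, which is the ingredient doing the real work; its justification uses structural features of the stable category of MCM modules (for example, Kn\"orrer periodicity reducing to dimension $0$ or $2$, where the pairing can be computed concretely on the Auslander--Reiten quiver and checked to be non-degenerate). The reduction via syzygies, while technically delicate due to the Tor correction terms in the long exact sequences, is expected to go through by standard means.
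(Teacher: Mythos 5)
The central mechanism of your argument is backwards. You claim that the decisive feature of simple singularities of positive even dimension is that the Herbrand/theta pairing is \emph{non-degenerate} on the stable category, so that $\eta^R(M,N)=0$ forces one of $M$ or $N$ to be free. In fact the decisive feature is the opposite: over such rings the theta pairing $\theta^R(M,N)$ \emph{vanishes identically} for all pairs of modules whose higher Tor's have finite length (this is the content of Dao's theorem \cite[3.16]{Da1} and of the results in \cite{Bounds} you cite). So $\theta^R(M,N)=0$ is automatic and carries no information about freeness. Moreover, even if the pairing were non-degenerate, vanishing against a single $N$ would not force $M$ to be stably trivial — for that you would need $\theta^R(M,-)\equiv 0$.

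What the theta invariant actually buys is Tor-rigidity: by \cite[2.8]{Da1}, $\theta^R(M,N)=0$ over a hypersurface implies the pair $(M,N)$ is Tor-rigid. The hypothesis $\pd_R(M\otimes_RN)<\depth(R)$ gives, via Auslander--Buchsbaum, $\depth_R(M\otimes_RN)\geq 1$, so $M\otimes_RN$ is torsion-free; feeding this into the rigidity machinery (the arguments of \cite[1.9]{HW2} and \cite[2.11]{CGTT} that the paper invokes) yields $\Tor_i^R(M,N)=0$ for all $i\geq 1$. Only then does \cite[1.1]{Mi}, together with Auslander--Buchsbaum, give the bounds on $\pd_R(M)$ and $\pd_R(N)$. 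A secondary gap: you assert that finiteness of $\pd_R(M\otimes_RN)$ plus Huneke--Wiegand rigidity forces $\Tor_i^R(M,N)=0$ for $i\gg 0$, but no reason is given, and in fact this eventual vanishing is close to the crux of what must be proved (the rigidity theorem runs from eventual vanishing to earlier vanishing, not from finite projective dimension of the tensor product to eventual vanishing). Finally, the paper does not pass to MCM syzygies as you do; it reduces instead to torsion-free modules via $M\otimes_RN\cong\overline{M}\otimes_RN\cong M\otimes_R\overline{N}$ (\cite[1.1]{HW1}), which sidesteps the Tor correction terms that you acknowledge are delicate.
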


\begin{proof} Assume $M$ and $N$ are both locally free on the punctured spectrum of $R$. Then $M\otimes_RN$ is torsion-free since $\depth_R(M\otimes_RN)\geq 1$. Hence $M\otimes_RN\cong \overline{M}\otimes_R N \cong M \otimes_R\overline{N}$, where $\overline{(-)}$ denotes the torsion-free part of the module in question; see \cite[1.1]{HW1}. So, in view of \cite[2.8 and 3.16]{Da1} and \cite[1.9]{HW2}, we can use an argument similar to \cite[2.11]{CGTT} and conclude that both $M$ and $N$ are torsion-free, and $\Tor_i^R(M,N)=0$ for all $i\geq 1$. The case where $M$ and $N$ are maximal Cohen-Macaulay follow similarly because $R$ is an isolated singularity, that is, $R_{\fp}$ is regular for each non-maximal prime ideal $\fp$ of $R$.
\end{proof}
 
\section*{acknowledgements}
The authors are grateful to Roger Wiegand for providing us with the write-up \cite{RW}, and for his valuable comments during the preparation of the manuscript. The authors also thank Hiroki Matsui for discussions about a previous version of the manuscript, especially about \ref{cor1}, and Yongwei Yao for discussions about Lemma \ref{YO} and \ref{F1}.

Part of this work was completed when Kobayashi visited West Virginia University in September 2022 and March 2023. 

\bibliography{a}
\bibliographystyle{amsplain}
\end{document}